\title{Algorithmically Distinguishing Irreducible Characters of the Symmetric Group}
\author{Timothy Y. Chow \and Jennifer Paulhus}
\date{August 2020}
\newcommand{\dop}{doppelg\"anger}
\newcommand{\Dop}{Doppelg\"anger}
\DeclareMathOperator{\sgn}{sgn}
\newtheorem{theorem}{Theorem}
\newtheorem{lemma}{Lemma}
\theoremstyle{definition}
\newtheorem{definition}{Definition}
\begin{document}
\maketitle

\begin{abstract} 
Suppose that $\chi_\lambda$ and $\chi_\mu$ are
distinct irreducible characters of the symmetric group $S_n$.
We give an algorithm that, in time polynomial in~$n$,
constructs $\pi\in S_n$
such that $\chi_\lambda(\pi)$ is provably different from $\chi_\mu(\pi)$.
In fact, we show a little more.
Suppose $f = \chi_\lambda$ for some irreducible character
$\chi_\lambda$ of $S_n$,
but we do not know~$\lambda$,
and we are given only oracle access to $f$.
We give an algorithm that determines~$\lambda$,
using a number of queries to $f$ that is polynomial in~$n$.
Each query can be computed in time polynomial in~$n$
by someone who knows~$\lambda$.
\end{abstract}

\section{Introduction}

This paper is motivated by the following question.
Suppose that we are given
two distinct irreducible characters
$\chi_\lambda$ and~$\chi_\mu$
of the symmetric group~$S_n$.
How hard is it to find a permutation $\pi\in S_n$
such that $\chi_\lambda(\pi) \ne \chi_\mu(\pi)$?

Surprisingly, this simple and natural question does
not seem to have been considered before in the literature.
On the one hand, one might guess that the problem is hard,
since Pak and Panova~\cite[Theorem 7.1]{pak-panova}
have shown that even determining whether
$\chi_\lambda(\pi) = 0$ is in general
$\mathsf{NP}$-hard---in fact,
it is \emph{strongly} $\mathsf{NP}$-hard
(Pak, personal communication),
meaning there is no algorithm that runs in time polynomial in~$n$
unless $\mathsf{P} = \mathsf{NP}$.

On the other hand, empirically,
if one simply tries various permutations---especially
permutations with a lot of fixed points---then
it seems to take at most a few tries to find a~$\pi$
such that $\chi_\lambda(\pi) \ne \chi_\mu(\pi)$.
However, proving that this heuristic procedure always works
does not seem to be easy.
For example, Craven~\cite{craven} has shown that the number of
distinct irreducible characters of~$S_n$ with the same degree
can be arbitrarily large.
Similarly, it seems that
known results on character values (e.g.,~\cite{larsen-shalev})
do not provide us with enough control over
``unexpected'' equalities of the form
$\chi_\lambda(\pi)  = \chi_\mu(\pi)$
(for $\pi$ with many fixed points) to answer our question.

Nevertheless, in this paper we give an algorithm that
solves the stated problem in polynomial time.
The heart of our solution is an algorithm for
the following related problem.
We are given a positive integer~$n$
as well as \emph{oracle access}
to a function $f$ on the symmetric group~$S_n$,
meaning that the only way we can obtain information about~$f$
is to submit a \emph{query} (i.e., an input value that we are free to choose)
$\pi\in S_n$ to an oracle,
which then truthfully tells us the value of~$f(\pi)$.
We are promised that $f = \chi_\lambda$ for some
irreducible character $\chi_\lambda$ of~$S_n$,
but we do not know~$\lambda$.
Our job is to determine~$\lambda$
via a sequence of queries to the oracle.
Our queries are allowed to be \emph{adaptive}; that is,
we may examine the results of previous queries
when deciding which query to submit next.

\begin{theorem}
\label{thm:main}
There is a deterministic algorithm that, given oracle access to
a function~$f$ that is promised to be an irreducible
character of~$S_n$, determines which irreducible character it is,
using a number of queries that is polynomial in~$n$.
\end{theorem}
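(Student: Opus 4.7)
My plan is to query $\chi_\lambda$ on a linear number of single-cycle permutations and then reconstruct $\lambda$ by inverting a classical character formula. For each $k \in \{1, 2, \ldots, n\}$, pick a permutation $\pi_k \in S_n$ of cycle type $(k, 1^{n-k})$ and query the oracle to learn $\chi_\lambda(\pi_k)$. Write $f^\lambda = \chi_\lambda(\pi_1)$ for the dimension, and form the normalized characters $\hat\chi_\lambda(\pi_k) = \chi_\lambda(\pi_k)/f^\lambda$.

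The key algebraic input is Kerov's polynomial theorem (in the form due to Kerov and Biane): for each $k \geq 1$ there is an explicit integer polynomial $K_k$ such that
\[
  n(n-1)\cdots(n-k+1) \cdot \hat\chi_\lambda(\pi_k) \;=\; K_k\bigl(R_2(\lambda), R_3(\lambda), \ldots, R_{k+1}(\lambda)\bigr),
\]
where $R_j(\lambda)$ is the $j$-th free cumulant of $\lambda$. The polynomial $K_k$ is triangular in the sense that $R_{k+1}(\lambda)$ appears linearly with leading coefficient one, and all other terms depend only on $R_2(\lambda), \ldots, R_k(\lambda)$. I can therefore iteratively solve for $R_{k+1}(\lambda)$ as $k$ runs over $1, 2, \ldots, n$, recovering $R_2(\lambda), \ldots, R_{n+1}(\lambda)$ in polynomial time.

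Once the free cumulants are in hand, the classical moment--cumulant transform yields the content power sums $p_j^c(\lambda) = \sum_{(a, b) \in \lambda}(b - a)^j$. Since $\lambda$ has exactly $n$ cells, the multiset of contents (an $n$-element multiset of integers, each of absolute value less than $n$) is determined by its first $n$ power sums via Newton's identities and factoring the associated monic polynomial over $\mathbb{Z}$. Finally, the content multiset determines $\lambda$ by a short combinatorial argument: the maximum content equals $\lambda_1 - 1$, so $\lambda_1$ is read off; deleting one copy each of $\{0, 1, \ldots, \lambda_1 - 1\}$ from the multiset and shifting the remaining values up by one produces the content multiset of $(\lambda_2, \lambda_3, \ldots)$, and we recurse to extract all parts.

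The main obstacle is the algorithmic content of the Kerov polynomial inversion: one must verify that the polynomials $K_k$, their back-substitution, and the moment--cumulant transform all run in time polynomial in~$n$, and that the intermediate quantities (free cumulants, content power sums, character values) have polynomial bit-length. These properties follow from known bounds on Kerov polynomial coefficients together with a uniform bound $|c(x)| < n$ on contents, but the careful complexity accounting --- especially ensuring that the triangular inversion does not blow up denominators --- is the principal technical ingredient of the proof.
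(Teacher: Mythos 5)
Your route is genuinely different from the paper's, and in outline it can be made to work. The paper stays entirely inside the Murnaghan--Nakayama rule: an adaptive forward pass with queries built from long cycles pins down the principal hook lengths, and a backward pass recovers each principal hook's shape, with a delicate case analysis (greedy vs.\ non-greedy border-strip tableaux, parity arguments) to separate a partition from its ``\dop''. You instead make $n$ non-adaptive queries at the classes $(k,1^{n-k})$ and invert algebraically: Kerov's theorem does give $\Sigma_k(\lambda)=n(n-1)\cdots(n-k+1)\hat\chi_\lambda(\pi_k)=R_{k+1}+(\hbox{integer polynomial in }R_2,\dots,R_{k-1})$, so the triangular recovery of $R_2,\dots,R_{n+1}$ is sound, and the content multiset does determine $\lambda$ by exactly the recursion you describe. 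If completed, your argument buys something the paper explicitly says it could not prove directly: a guarantee that two distinct irreducible characters always differ on some permutation with a single nontrivial cycle (and those values are polynomial-time computable from $\lambda$ via Murnaghan--Nakayama plus the hook-length formula, so the motivating application also goes through). The price is heavy machinery (free cumulants, transition measures) versus the paper's elementary, self-contained tableau counting.

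Three points need repair. First, ``the classical moment--cumulant transform yields the content power sums'' conflates coordinate systems: the $R_j$ are free cumulants of Kerov's \emph{transition measure}, so the moment--cumulant transform gives transition-measure moments, and you need the further (known, but different) identity relating the Cauchy transform of the transition measure to the content generating function --- or argue directly that the transition measure determines the shape. Second, a degree count shows $\Sigma_1,\dots,\Sigma_n$ only determine $R_2,\dots,R_{n+1}$, hence the content power sums $p^c_1,\dots,p^c_{n-1}$, not the first $n$ as you claim; this is still enough, but only because the content multiset always contains $0$ (the cell $(1,1)$), so $e_n=0$ is known and Newton's identities with $p^c_1,\dots,p^c_{n-1}$ pin down the characteristic polynomial --- say this. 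Third, your polynomial-time claim for the inversion is not justified as written: $K_k$ has roughly $p(k)$ monomials (the number of partitions of $k+1$ into parts $\ge 2$), which is superpolynomial, so ``known bounds on Kerov polynomial coefficients'' do not rescue a term-by-term evaluation; fortunately the theorem as stated only bounds the number of queries, so this does not sink the proof, but if you want polynomial-time post-processing you should bypass Kerov polynomials (e.g.\ invert via the generating-function identities or via symmetric functions of Jucys--Murphy elements) rather than assert it.
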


Note that Theorem~\ref{thm:main} focuses not on
\emph{computational complexity} but on
\emph{query complexity}, since the latter is more natural
in a context where the irreducible character is unknown.
However, an interesting feature of our algorithm is that,
instead of querying permutations with many fixed points,
it mainly queries permutations with rather long cycles
and very few fixed points.
The possible border-strip tableaux are thereby
severely constrained, allowing us to enumerate them explicitly
and prove the inequalities we want.

To solve our original problem
of finding~$\pi$ such that $\chi_\lambda(\pi) \ne \chi_\mu(\pi)$,
one simply simulates the algorithm using $f = \chi_\lambda$
until one reaches a query that rules out the possibility
that $f=\chi_\mu$.
As will become apparent when we describe the algorithm,
the specific computations we need can all be done in time polynomial
in~$n$ when $\lambda$ is known;
in fact, in many cases,
all that is needed is to determine whether $\chi_\lambda(\pi)$
is nonzero, or whether it is even or odd.
Though, as we noted above, such questions can be hard in general,
they are easy in the cases we need.

After some necessary preliminaries in Section~\ref{sec:background},
we describe the overall structure of our algorithm
in Section~\ref{sec:sketch}.
There is one step of the algorithm that,
as far as we can see, requires a complicated
case analysis; this is carried out in Section~\ref{sec:firsthook}.

\section{Background}
\label{sec:background}

In this section we review some standard material.
It turns out that in the case of the symmetric group,
there is a natural bijection
between irreducible characters
and conjugacy classes, and conjugacy classes are
naturally indexed by partitions (the lengths of the cycles
of the permutation).
So the first order of business is to review
some of the combinatorics of partitions.

\subsection{Partitions and Young Diagrams}
\label{sec:prelim}

Let $n$ be a positive integer.
A \emph{composition of~$n$} is defined to be
a sequence $\alpha = (\alpha_1, \ldots, \alpha_\ell)$
of positive integers such that $\sum_{i=1}^\ell \alpha_i= n$.
A \emph{partition of~$n$} is a composition
$\lambda = (\lambda_1, \ldots, \lambda_\ell)$ of~$n$ such that
\begin{equation*}
\lambda_1 \ge \lambda_2 \ge \cdots \ge \lambda_\ell.
\end{equation*}
A partition of~$n$ may be visualized as a \emph{Young diagram},
which is a left-justified grid of boxes having $\lambda_i$ boxes
in row~$i$.  See Figure~\ref{fig:youngdiagram} for an example.
The boxes of a Young diagram are coordinatized
in the same way that matrix entries are coordinatized;
i.e., box $(i,j)$ is the $j$th box from the left
in the $i$th row from the top, where $i\ge1$ and $j\ge1$.

\ytableausetup{centertableaux,smalltableaux}
\begin{figure}[!ht]
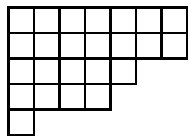

\begin{center}
 \begin{ytableau}
  \   &       &       &       &     &   & \\
      &       &       &       &     &   & \\
      &       &       &       &       \\
      &       &       &               \\
  \ 
\end{ytableau}
\end{center}
\caption{Young Diagram of the Partition $(7,7,5,4,1)$}
\label{fig:youngdiagram}
\end{figure}

Following standard terminology~\cite[Section 7.2]{stanley2},
we define the \emph{conjugate~$\lambda'$} of a partition~$\lambda$
to be the sequence of column lengths of
the Young diagram of~$\lambda$.
For example, the conjugate of $(7,7,5,4,1)$ is $(5,4,4,4,3,2,2)$.
A partition~$\lambda$ is \emph{self-conjugate} if $\lambda=\lambda'$.

The \emph{principal diagonal} of a Young diagram is
the set of boxes with coordinates $(i,i)$ for some~$i$.

For the purposes of this paper, it will be convenient
to think of Young diagrams in a slightly nonstandard manner,
namely as a nested sequence of \emph{principal hooks}.

\begin{definition}

The \emph{$i$th principal hook} of a Young diagram~$D$ is the set
\begin{equation*}
H_i := \{(i,j)\in D : j\ge i\} \cup \{(j,i)\in D : j\ge i\}.
\end{equation*}
The \emph{$i$th principal hook length} $h_i$ is the
area (i.e., cardinality) of~$H_i$.
\end{definition}

 For example, Figure~\ref{fig:principalhooks} illustrates
the (nonempty) principal hooks of
the Young diagram from Figure~\ref{fig:youngdiagram},
where we have colored $H_1$ red, $H_2$ orange, $H_3$ yellow, and $H_4$ green.

\begin{figure}[!ht]
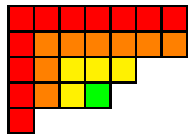

\begin{center}
\begin{ytableau}
  *(red)   &  *(red)     &  *(red)  &  *(red)   & *(red)  & *(red)  & *(red)  \\
  *(red)   &  *(orange)  &  *(orange) & *(orange) & *(orange) & *(orange)  & *(orange) \\
  *(red)   &  *(orange)  &  *(yellow) & *(yellow) & *(yellow)  \\
  *(red)   &  *(orange)  &  *(yellow) & *(green) \\
  *(red)
\end{ytableau}
\end{center}
\caption{Principal Hook Decomposition of $(7,7,5,4,1)$}
\label{fig:principalhooks}
\end{figure}

 The horizontal part of a principal hook is known as its
\emph{arm} and the vertical part is known as its \emph{leg}.
Note that the arm of $H_i$ extends at least as far to the right
as the arm of $H_{i+1}$, and may extend farther.
If it extends farther, we refer to the extra boxes as
the \emph{$i$th arm overhang} (and similarly for the legs).
More formally, we have the following definitions.

\begin{definition}

Suppose that a Young diagram has
exactly $k$ nonempty principal hooks.
Then for $1 \le i \le k$, the \emph{$i$th arm overhang} is
\begin{equation*}
\{(i,j) \in H_i : \mbox{$j > i$ and $(i+1,j) \notin H_{i+1}$} \}.
\end{equation*}
Similarly, the \emph{$i$th leg overhang} is
\begin{equation*}
\{(j,i) \in H_i : \mbox{$j > i$ and $(j,i+1) \notin H_{i+1}$} \}.
\end{equation*}
If the $i$th arm overhang and the $i$th leg overhang have
different cardinalities, then we call the smaller one the
\emph{$i$th short overhang} and we let $a_i$ be its cardinality;
similarly we call the longer one the \emph{$i$th long overhang}
and we let $b_i$ be its cardinality.  If the $i$th arm overhang
and the $i$th leg overhang have the same cardinality then
we set both $a_i$ and $b_i$ equal to that cardinality.
\end{definition}

 In our running example, the 1st leg overhang
and the 2nd and 3rd arm overhangs are nonempty,
as indicated by the colored boxes in Figure~\ref{fig:overhangs},
and $a_1 = a_2 = a_3 = a_4 = 0$ and
$b_1 = 1$, $b_2 = 2$, $b_3 = 1$, $b_4 = 0$.

\begin{figure}[!ht]
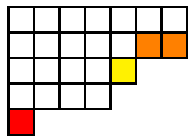

\begin{center}
\begin{ytableau}
 \   &  &  & & & &   \\
     &  &  & & & *(orange) & *(orange) \\
     &  &  &  & *(yellow)  \\
     &  &  & \\
 *(red) 
\end{ytableau}
\end{center}
\caption{Nonempty Overhangs of $(7,7,5,4,1)$}
\label{fig:overhangs}
\end{figure}

\begin{lemma}
\label{lem:hkak3}
If there are more than $k$ principal hooks, then $h_k - a_k \ge 3$.
\end{lemma}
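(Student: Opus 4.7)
The plan is to give a short geometric counting argument: exhibit three specific boxes of $H_k$ that cannot lie in either overhang, and hence cannot lie in the short overhang, which immediately yields $h_k - a_k \ge 3$.

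First I would use the hypothesis ``more than $k$ principal hooks'' to conclude that $H_{k+1}$ is nonempty, i.e., that the box $(k+1,k+1)$ belongs to the Young diagram~$D$. Because $D$ is the Young diagram of a partition, every box weakly northwest of $(k+1,k+1)$ also belongs to~$D$; in particular the three boxes $(k,k)$, $(k,k+1)$, and $(k+1,k)$ lie in~$D$. These three boxes all lie in $H_k$: the first is the corner, the second lies in the arm (it satisfies $j = k+1 \ge k$), and the third lies in the leg.

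Next I would check that none of these three boxes lies in the $k$th arm overhang or the $k$th leg overhang. The corner $(k,k)$ fails the requirement $j > i$ appearing in both overhang definitions. For $(k,k+1)$, membership in the arm overhang would require $(k+1,k+1) \notin H_{k+1}$; but $(k+1,k+1)$ is precisely the corner of $H_{k+1}$ and so lies in $H_{k+1}$. The symmetric argument shows $(k+1,k)$ is not in the leg overhang. Since the $k$th short overhang is \emph{one} of the arm overhang or the leg overhang, all three of the boxes above survive the removal of the short overhang from $H_k$, giving $h_k - a_k \ge 3$.

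I don't anticipate any real obstacle; the only thing to be careful about is to phrase the verification in the previous paragraph so that the argument is insensitive to whether the short overhang happens to be the arm overhang or the leg overhang (or both, in the tied case where $a_k = b_k$). Since each of the three candidate boxes is shown to avoid \emph{both} overhangs, a single uniform argument suffices and no case split on the identity of the short overhang is required.
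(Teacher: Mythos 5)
Your proof is correct and is essentially the paper's argument made explicit: the paper's one-line proof (the $(k+1)$st hook being nonempty forces three non-overhanging boxes of $H_k$) is exactly your identification of $(k,k)$, $(k,k+1)$, and $(k+1,k)$. Your careful verification that each box avoids both overhangs, so that no case split on which overhang is short is needed, is a sound and complete elaboration of the same idea.
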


\begin{proof}

By assumption, the $(k+1)$st principal hook has at least one box,
so the $k$th principal hook must have at least three boxes
that do not overhang the $(k+1)$st principal hook.
\end{proof}

The next definition plays a key role in this paper.

\begin{definition}
The \emph{\dop} $\hat\lambda$ of a partition $\lambda$ is
the partition whose Young diagram~$\hat D$
is the same as the Young diagram~$D$ of~$\lambda$
except that the cardinality of the 1st arm overhang of~$\hat\lambda$
equals the cardinality of the 1st leg overhang of~$\lambda$,
and vice versa.
\end{definition}

 For example, the \dop\ of $(7,7,5,4,1)$
is $(8,7,5,4)$, obtained by swapping the 1st leg overhang
(whose cardinality is~$1$) with the 1st arm overhang
(whose cardinality is~$0$).
Note that $\lambda$ and~$\hat\lambda$ have
the same values of $h_i$, $a_i$, and~$b_i$ for all~$i$.

The reason we have chosen the term ``\dop'' is that it turns out
to be surprisingly tricky to find a permutation~$\pi$ such that
$\chi_\lambda(\pi)$ is provably different from $\chi_{\hat\lambda}(\pi)$.

\subsection{Border-Strip Tableaux}
\label{sec:bst}

 In the introduction, we mentioned but did not define
irreducible characters of~$S_n$.
The only fact about irreducible characters of~$S_n$
that we need in this paper
(besides the fact that they are indexed by partitions of~$n$)
is a famous result known as the
\emph{Murnaghan--Nakayama rule}, which gives a combinatorial
rule for computing them.
In this section, we give a complete statement of the
Murnaghan--Nakayama rule,
so the reader unfamiliar with the concept of an irreducible
character may take the Murnaghan--Nakayama rule as a definition.
For more details, including a proof of the
Murnaghan--Nakayama rule, the interested reader can consult
Sagan~\cite[Section 4.10]{sagan} or Stanley \cite[Chapter 7]{stanley2}.

To state the Murnaghan--Nakayama rule, we must define
border strips (also known as rim hooks or ribbons)
and border-strip tableaux.

\begin{definition}

A \emph{border strip} is a finite set of boxes such that
in each row, the boxes in that row are contiguous,
and except for the top row, the \emph{rightmost} box in each row
lies directly underneath the \emph{leftmost} box in the row above it.
The \emph{area} or \emph{size} of a border strip is the total number of boxes.
If $B$ is a border strip, its \emph{height} $h(B)$
is the number of rows of~$B$ minus~1.
\end{definition}

In Figure~\ref{fig:borderstrip},
the four border strips have areas $11$, $7$, $1$,~$8$
and heights $3$, $2$, $0$,~$3$ respectively.
Note that the definition of a border strip
ensures that the boxes are orthogonally connected,
and that a border strip never contains two boxes $(i,j)$ and $(i',j')$
with $i<i'$ and $j<j'$.

\begin{figure}[!ht]
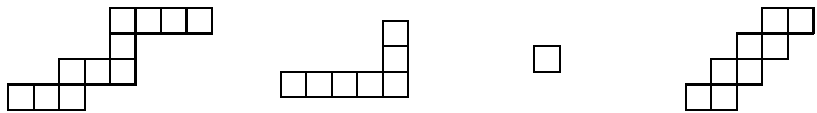

\begin{center}
\begin{ytableau}
\none & \none & \none & \none & & & & \\
\none & \none & \none & \none & \\
\none & \none &       &       & \\
      &       &        \\
\end{ytableau}
\hskip 0.3 in
\begin{ytableau}
\none & \none & \none & \none & \\
\none & \none & \none & \none & \\
\      &       &       &       &
\end{ytableau}
\hskip 0.6 in
\begin{ytableau}
\ \\
\end{ytableau}
\hskip 0.6 in
\begin{ytableau}
\none & \none & \none & & \\
\none & \none &       &   \\
\none &       & \\
      &  \\
\end{ytableau}
\end{center}
\caption{Four Examples of Border Strips}
\label{fig:borderstrip}
\end{figure}

\begin{definition}
 Let $n$ be a positive integer,
let $\lambda$ be a partition of~$n$,
and let $\alpha$ be a composition of~$n$.
A \emph{border-strip tableau (BST) of shape~$\lambda$ and type~$\alpha$}
is a tiling of the Young diagram of~$\lambda$ with
border strips such that
\begin{enumerate}
\item  the area of the $i$th border strip is~$\alpha_i$, and
\item  if the number~$i$ is written in each box of the $i$th
border strip, then the numbers weakly increase across every row and down
every column.
\end{enumerate}
\end{definition}

Figure~\ref{fig:bstexample1} shows an example of a BST
where we have colored each border strip with a different color
as a visual aid.

\begin{figure}[!ht]
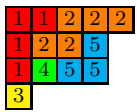

\begin{center}
\begin{ytableau}
*(red) 1 & *(red) 1 & *(orange) 2 & *(orange) 2 & *(orange) 2\\
*(red) 1 & *(orange) 2 & *(orange) 2 & *(cyan) 5 \\
*(red) 1 & *(green) 4 & *(cyan) 5 & *(cyan) 5 \\
*(yellow) 3 \\
\end{ytableau}
\end{center}
\caption{Border-Strip Tableau of Shape (5,4,4,1) and Type (4,5,1,1,3)}
\label{fig:bstexample1}
\end{figure}

 In Section~\ref{sec:dop}, we will want to consider
\emph{partial border-strip tableaux}.
We will not give a completely formal definition,
but the idea is that we take a BST
and consider only the placement of the first few border strips,
ignoring the placement of the remaining border strips.

We are now ready to state the Murnaghan--Nakayama rule.

\begin{theorem}
\label{thm:murnaghan-nakayama}
 Let $\lambda$ be a partition of~$n$,
and let $\chi_\lambda$ be the irreducible character of~$S_n$
indexed by~$\lambda$.
If $\pi\in S_n$ and $(\alpha_i)$ is
the sequence of cycle lengths of~$\pi$, then
\begin{equation}
\label{eq:murnaghan-nakayama}
\chi_\lambda(\pi) = \sum_{T} \prod_{B\in T} (-1)^{h(B)},
\end{equation}
where the sum is over all BSTs~$T$
of shape~$\lambda$ and type~$\alpha$,
and the product is over the border strips~$B$ that tile~$T$.
\end{theorem}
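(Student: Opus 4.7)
The plan is to deduce Murnaghan--Nakayama from two standard ingredients: the Frobenius characteristic map, and a one-strip Pieri-type identity on symmetric functions.

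First I would invoke the Frobenius characteristic $\mathrm{ch}$, the isometry from the space of class functions on $S_n$ to the degree-$n$ piece of the ring of symmetric functions that sends $\chi_\lambda \mapsto s_\lambda$ and sends the indicator of the conjugacy class of cycle type $\alpha$ to $z_\alpha^{-1} p_\alpha$. Taking inner products, this yields $\chi_\lambda(\pi) = \langle s_\lambda, p_\alpha\rangle$ whenever $\pi$ has cycle type $\alpha$, so it suffices to expand $p_\alpha$ in the Schur basis and read off the coefficient of $s_\lambda$.

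The heart of the matter is then the one-step identity
\[
p_r \cdot s_\mu \;=\; \sum_{\nu} (-1)^{h(\nu/\mu)}\, s_\nu,
\]
where the sum runs over partitions $\nu \supseteq \mu$ whose skew shape $\nu/\mu$ is a border strip of size $r$. I would prove this using the Jacobi--Trudi identity $s_\mu = \det(h_{\mu_i - i + j})$ together with the elementary hook expansion $p_r = \sum_{k=0}^{r-1}(-1)^k s_{(r-k,1^k)}$, which reduces to a direct check since a hook of size $r$ admits a unique tiling by a single border strip. Multiplying $p_r$ through the Jacobi--Trudi determinant and collecting terms produces exactly the signed sum over single border strips, once one matches the $(-1)^j$ coming from the determinant expansion with the height of the resulting strip.

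Iterating is then straightforward. Writing $p_\alpha = p_{\alpha_1}\cdots p_{\alpha_\ell}$ and applying the one-step identity repeatedly, starting from $s_\varnothing = 1$, the coefficient of $s_\lambda$ in $p_\alpha$ becomes a signed sum over chains $\varnothing = \mu^{(0)} \subset \mu^{(1)} \subset \cdots \subset \mu^{(\ell)} = \lambda$ in which each $\mu^{(i)}/\mu^{(i-1)}$ is a border strip of size $\alpha_i$. But such a chain is precisely a BST of shape $\lambda$ and type $\alpha$, the weakly-increasing condition in the BST definition being equivalent to the nested-shapes formulation; its sign is $\prod_B (-1)^{h(B)}$. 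Pairing against $s_\lambda$ yields the claim. The main obstacle is the one-step identity; everything else is bookkeeping. If I wanted to avoid Jacobi--Trudi entirely, I would instead prove the one-step rule via a sign-reversing involution on pairs consisting of a skew semistandard filling of $\nu/\mu$ together with a marked entry, cancelling every contribution except those supported on single border strips.
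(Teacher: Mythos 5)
The paper never proves this theorem: it is stated as background, with the proof explicitly deferred to Sagan (Section 4.10) and Stanley (Chapter 7), and the authors even suggest the reader may take the rule as the definition of $\chi_\lambda$. Your outline is the standard symmetric-function proof --- essentially the argument in the very references the paper cites --- and it is correct as a plan. The Frobenius characteristic does reduce the claim to the identity $\chi_\lambda(\alpha)=\langle s_\lambda,p_\alpha\rangle$; the single-strip rule $p_r\,s_\mu=\sum_\nu(-1)^{h(\nu/\mu)}s_\nu$ is the real content; and iterating it turns chains $\varnothing=\mu^{(0)}\subset\cdots\subset\mu^{(\ell)}=\lambda$ with border-strip steps into BSTs, since the weakly-increasing labelling condition in the paper's definition is equivalent to the nested-shape formulation (as a bonus, your route explains the paper's remark that the answer is independent of the ordering of the cycle lengths, because the $p_{\alpha_i}$ commute). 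The one loose joint is your proof of the single-strip rule: ``multiplying $p_r$ through the Jacobi--Trudi determinant and collecting terms'' is not a mechanical step, since $p_r h_m$ has no clean expansion; the classical execution instead multiplies $p_r$ into the bialternant $a_{\mu+\delta}=\det\bigl(x_j^{\mu_i+n-i}\bigr)$, giving $p_r\,a_{\mu+\delta}=\sum_i a_{\mu+\delta+r\epsilon_i}$, and straightening the exponent vectors yields exactly the border strips with sign $(-1)^{h}$; alternatively, your hook expansion $p_r=\sum_{k=0}^{r-1}(-1)^k s_{(r-k,1^k)}$ combined with the Pieri/Littlewood--Richardson rule for hooks, or the sign-reversing involution you mention, also works. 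Since you correctly isolate that lemma as the main obstacle and it is standard, the proposal is sound; it simply supplies the proof that the paper outsources.
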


The expression $\prod_{B\in T} (-1)^{h(B)}$ appearing
in Equation~\eqref{eq:murnaghan-nakayama} is called
the \emph{sign} of the BST~$T$.

The alert reader may notice that
the above statement of Theorem~\ref{thm:murnaghan-nakayama}
speaks of ``the'' sequence of cycle lengths of~$\pi$,
but there is no canonical ordering
on the set of cycle lengths of a permutation.
It is a remarkable and nontrivial fact that
Theorem~\ref{thm:murnaghan-nakayama} remains true
no matter what ordering is chosen.
For example, let $\lambda = (5,4,2)$ and let $\pi$ be a permutation
with cycle lengths $6$, $3$, and~$2$.
If we let $\alpha = (6,3,2)$ then there are
two BSTs of shape~$\lambda$ and type~$\alpha$,
as shown in Figure~\ref{fig:bstexample2}.
One of these BSTs contributes $+1$ 
to the sum in Equation~\eqref{eq:murnaghan-nakayama}
and the other contributes~$-1$,
so $\chi_\lambda(\pi) = 0$.

\begin{figure}[!ht]
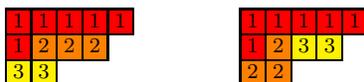

\begin{center}
\begin{ytableau}
*(red) 1 & *(red) 1  & *(red) 1  & *(red) 1  & *(red) 1  \\
*(red) 1 & *(orange) 2 & *(orange) 2 & *(orange) 2 \\
*(yellow) 3 & *(yellow) 3
\end{ytableau}
\hskip 0.5in
\begin{ytableau}
*(red) 1 & *(red) 1  & *(red) 1  & *(red) 1  & *(red) 1  \\
*(red) 1 & *(orange) 2 & *(yellow) 3 & *(yellow) 3 \\
*(orange) 2 & *(orange) 2
\end{ytableau}
\end{center}
\caption{Border-Strip Tableaux of Shape (5,4,2) and Type (6,3,2)}
\label{fig:bstexample2}
\end{figure}
On the other hand, the reader can check
that if we let $\alpha=(6,2,3)$ then there are
\emph{no} BSTs of shape~$\lambda$ and type~$\alpha$,
so again Equation~\eqref{eq:murnaghan-nakayama}
tells us that $\chi_\lambda(\pi) = 0$,
but for a ``different reason'' combinatorially.

Since the value of~$\chi_\lambda(\pi)$ depends only on
the sequence of cycle lengths of~$\pi$, for the rest of this paper
we will usually regard $\chi_\lambda$ as a function of a composition of~$n$,
and our queries will be compositions rather than permutations.

We conclude this section with a couple
of simple but useful facts about BSTs.

\begin{lemma}
\label{lem:principaldiag}
A border strip of a BST cannot contain more than one
box on the principal diagonal.
\end{lemma}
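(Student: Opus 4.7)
\medskip

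The plan is to derive this directly from the structural observation made just after the definition of a border strip, namely that a border strip $B$ cannot contain two boxes $(i,j)$ and $(i',j')$ with $i<i'$ and $j<j'$ simultaneously. (This in turn follows from the defining property: the rightmost box of each row lies directly beneath the leftmost box of the row above, so as one moves down the rows of $B$ the column indices weakly decrease.)

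Given this, I would argue by contradiction. Suppose that some border strip $B$ in a BST contained two distinct principal-diagonal boxes, say $(i,i)$ and $(i',i')$. Without loss of generality $i<i'$. But then, taking $j=i$ and $j'=i'$, the two boxes $(i,j)=(i,i)$ and $(i',j')=(i',i')$ of $B$ satisfy $i<i'$ and $j=i<i'=j'$, contradicting the observation recalled above. Hence no border strip can meet the principal diagonal in more than one box.

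The only step that requires any care is justifying the observation that a border strip cannot contain two boxes strictly southeast of one another; but this is immediate from the row-contiguity and the "rightmost-below-leftmost" condition, and is already asserted in the paragraph following the definition of a border strip, so I would simply cite it rather than reprove it. No real obstacle is anticipated — the lemma is essentially a one-line consequence of the shape constraint on border strips.
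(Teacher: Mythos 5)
Your proof is correct and takes essentially the same approach as the paper's: both rest on the structural fact that a border strip cannot contain one box strictly southeast of another, from which the claim follows immediately since two distinct diagonal boxes are always in that relative position. The paper re-derives this fact inline (``no box in a row below the $j$th can appear further to the right than~$x$'') rather than citing the remark after the definition, but the argument is the same.
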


\begin{proof}
Let $i$ be the smallest positive integer
such that the border strip contains
the box of the Young diagram with coordinates $(i,i)$.
This box---call it $x$---appears in some row, say the $j$th row,
of the border strip.
By the definition of a border strip,
no box in the border strip in any row below the $j$th
can appear further to the right than $x$ does,
so in particular no other box on the principal diagonal
can appear in the border strip.
\end{proof}

\begin{lemma}
\label{lem:firstk}
In any BST, for all $k$, the first $k$ border strips
fit into the first $k$ principal hooks.
\end{lemma}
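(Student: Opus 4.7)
The plan is to reformulate the containment claim in two steps: first, translate ``fits into the first $k$ principal hooks'' into a statement about the diagonal box $(k+1,k+1)$, and second, rule out that box using Lemma~\ref{lem:principaldiag}.

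Unwinding the definitions, for a box $(r,s)\in D$ the condition $(r,s)\in H_1\cup\cdots\cup H_k$ is equivalent to $\min(r,s)\le k$, because every box of $D$ lies in exactly one principal hook $H_{\min(r,s)}$. So the union of the first $k$ principal hooks is precisely the set $\{(r,s)\in D : \min(r,s)\le k\}$, and the complement inside $D$ consists of boxes $(r,s)$ with $r>k$ and $s>k$. Therefore the lemma is equivalent to the assertion that none of the first $k$ border strips contains any box $(r,s)$ with $r,s\ge k+1$, and since the union of the first $k$ border strips (as a subset of $D$) will turn out to have Young diagram shape, this is in turn equivalent to that union not containing the single box $(k+1,k+1)$.

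The crucial intermediate fact is that for every $k$, the union $\mu^{(k)}$ of the first $k$ border strips forms a Young diagram inside $D$. This comes straight from the BST definition: labels weakly increase across rows and down columns, so if $(i,j)\in\mu^{(k)}$ then every box $(i',j')$ with $i'\le i$ and $j'\le j$ has a label no larger, hence also belongs to $\mu^{(k)}$. Thus $\mu^{(k)}$ is an order ideal in the componentwise order on boxes of $D$, i.e., a partition shape.

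With this in hand, I finish by contradiction. Suppose $(k+1,k+1)\in\mu^{(k)}$. Since $\mu^{(k)}$ is a Young diagram, it must then contain each of the $k+1$ principal-diagonal boxes $(1,1),(2,2),\ldots,(k+1,k+1)$. On the other hand, Lemma~\ref{lem:principaldiag} says that each of the $k$ border strips making up $\mu^{(k)}$ contains at most one principal-diagonal box, so $\mu^{(k)}$ contains at most $k$ diagonal boxes, a contradiction. The only step that requires genuine thought is verifying the ``order ideal'' property of $\mu^{(k)}$; after that, Lemma~\ref{lem:principaldiag} does all the work, and no induction on $k$ is needed.
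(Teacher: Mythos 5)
Your proof is correct, but it takes a genuinely different route from the paper's. The paper argues by induction on $k$: assuming the first $k$ strips lie in the first $k$ principal hooks, if the $(k+1)$st strip contained a box with both coordinates at least $k+2$, the weakly increasing property would force both diagonal boxes $(k+1,k+1)$ and $(k+2,k+2)$ into that single strip, contradicting Lemma~\ref{lem:principaldiag}. You avoid induction entirely: you first note that a box $(r,s)$ lies in the hook $H_{\min(r,s)}$, so the lemma reduces to showing the union $\mu^{(k)}$ of the first $k$ strips avoids $(k+1,k+1)$; you then prove that $\mu^{(k)}$ is an order ideal (a Young diagram) via the weak-increase condition, and finish with a pigeonhole count: if $(k+1,k+1)\in\mu^{(k)}$, the ideal property forces all of $(1,1),\ldots,(k+1,k+1)$ into $\mu^{(k)}$, i.e.\ $k+1$ diagonal boxes distributed among at most $k$ strips, again contradicting Lemma~\ref{lem:principaldiag}. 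Both arguments rest on the same two ingredients (weak increase plus Lemma~\ref{lem:principaldiag}), but yours isolates a reusable structural fact---the first $k$ strips of any BST always form a sub-Young-diagram---and replaces the induction by a one-shot counting argument, whereas the paper's induction needs only the local observation that an escaping strip would have to swallow two consecutive diagonal boxes. Your reduction steps (the $\min(r,s)$ description of the hooks and the equivalence with avoiding the single box $(k+1,k+1)$) are all justified correctly.
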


\begin{proof}
We use induction on~$k$.  The case $k = 0$ is vacuously true.
Suppose the claim is true for some $k\ge 0$.
For the claim to fail for $k+1$,
the $(k+1)$st border strip must contain a box~$x$
in the $i$th principal hook for some $i \ge k+2$.
Then since the box $(k+2,k+2)$
lies (weakly) above and to the left of~$x$,
the weakly increasing property of a BST
forces $(k+2,k+2)$ to belong to border strip~$j$ for some $j\le k+1$.
But by the induction hypothesis, $(k+2,k+2)$
does not belong to any of the first $k$ border strips, so $j=k+1$.
The same argument proves that the box $(k+1,k+1)$
must also belong to border strip $k+1$.
This contradicts Lemma~\ref{lem:principaldiag}.
\end{proof}

\section{High-Level Structure of the Algorithm}
\label{sec:sketch}

The algorithm consists of a \emph{forward pass} followed by
a \emph{backward pass}.
As we explain in Section~\ref{sec:forward},
during the forward pass, we determine
the principal hook lengths~$h_i$ one at a time, in the order
$h_1, h_2, h_3, \ldots\,$.
As we explain in Section~\ref{sec:backward},
during the backward pass, we determine the principal hooks
(i.e., their actual shapes, not just their lengths) in \emph{reverse} order,
starting with the last principal hook and working backward.
It turns out that for the backward pass,
it suffices to show how to recover
the 1st principal hook given that we know all the later hooks,
and we accomplish this in two steps:
\begin{enumerate}
\item  Determine the overhang lengths $a_1$ and~$b_1$.
We give the details in Section~\ref{sec:overhang}.
This reduces the problem to distinguishing between $\lambda$
and its \dop~$\hat\lambda$.
\item  Distinguish between $\lambda$ and~$\hat\lambda$.
This step is surprisingly complicated, and subdivides into
several cases, which we explain in detail in Section~\ref{sec:dop}.
\end{enumerate}

\subsection{The Forward Pass}
\label{sec:forward}

In the forward pass, we determine the
principal hook lengths of~$\lambda$.
To determine~$h_1$, we define, for each $i\in \{0,1,\ldots,n-1\}$,
the composition~$\alpha^{(i)}$ of~$n$ by
\begin{equation*}
\alpha^{(i)}_j =
\begin{cases}
n - i, &\mbox{if $j=1$;}\\
1, &\mbox{if $2 \le j \le i+1$.}
\end{cases}
\end{equation*}
We submit the queries
$\alpha^{(0)}, \alpha^{(1)}, \alpha^{(2)}, \ldots$
successively to the oracle,
stopping as soon as we encounter a nonzero value.

The key observation is that by Lemma~\ref{lem:firstk} (with $k=1$),
there cannot exist a BST
of shape~$\lambda$ and type~$\alpha^{(i)}$
if $\alpha^{(i)}_1 > h_1$,
because the 1st border strip would be simply too large
to fit inside the 1st principal hook.
So if $\alpha^{(i)}_1 > h_1$,
then $\chi_\lambda(\alpha^{(i)}) = 0$.
Conversely, if $\alpha^{(i)}_1 = h_1$,
then the BSTs of shape~$\lambda$ and type~$\alpha^{(i)}$
are obtained by letting the 1st border strip
cover the entire 1st principal hook,
and then arranging the numbers from $2$ through $i+1$
in the rest of~$\lambda$ so that they increase across rows and down columns.
There is always at least one such BST;
if there are more, then they all have the same sign,
and hence in particular, $\chi_\lambda(\alpha^{(i)}) \ne 0$.
Therefore, if $i_0$ is the smallest~$i$ such that
$\chi_\lambda(\alpha^{(i)}) \ne 0$,
then $h_1 = \alpha^{(i_0)}_1 =  n - i_0$.

Once we know $h_1$, we can determine $h_2$ by a similar procedure.
If $h_1 = n$ then we are done.
Otherwise, we fix the size of the 1st border strip at~$h_1$,
and find, by repeated ``guessing and checking,''
the largest possible size of
the 2nd border strip; this is~$h_2$.  More formally,
we define compositions~$\beta^{(i)}$ of~$n$ as follows:
\begin{equation*}
\beta^{(i)}_j = 
\begin{cases}
h_1, & \mbox{if $j=1$;}\\
\min(h_1 - 2, n - h_1) - i, & \mbox{if $j=2$;}\\
1, &\mbox{if $3\le j \le n - \beta^{(i)}_1 - \beta^{(i)}_2 + 2$.}
\end{cases}
\end{equation*}
We submit the queries
$\beta^{(0)}, \beta^{(1)}, \beta^{(2)}, \ldots$
successively to the oracle,
stopping as soon as we encounter a nonzero value.
Since $\beta^{(i)}_1 = h_1$, Lemma~\ref{lem:firstk} tells us
that in any BST of shape~$\lambda$ and type~$\beta^{(i)}$,
the 1st border strip must entirely cover the 1st principal hook.
If $\beta^{(i)}_2 > h_2$, then there cannot be any BSTs
of shape~$\lambda$ and type~$\beta^{(i)}$,
because the 2nd border strip would be too large to fit inside
the 2nd principal hook.
Conversely, if $\beta^{(i)}_2 = h_2$,
then the BSTs of shape~$\lambda$ and type~$\beta^{(i)}$
are obtained by letting the 1st and 2nd border strips
cover the entire 1st and 2nd principal hooks respectively,
and then arranging the numbers from
$3$ to $n - \beta^{(i)}_1 - \beta^{(i)}_2 + 2$
in the rest of~$\lambda$ so that they increase
across rows and down columns.
As before, there is always at least one BST
of this form, and if there are more, then they all have the same sign.
So if $i_0$ is the smallest~$i$ such that
$\chi_\lambda(\beta^{(i)}) \ne 0$,
then $h_2 = \beta^{(i_0)}_2$.

The pattern should now be clear.
Given that we know the first few principal hook lengths,
we fix the corresponding border-strip sizes to be equal to
the known principal hook lengths,
and use the next border strip to ``guess''
the size of the next principal hook length,
starting with the largest conceivable value
and working our way downward.
If our guess is too large, then the oracle will return zero.
As soon as the oracle returns a nonzero value for $\chi_\lambda$,
that tells us that our guess for
the size of the next principal hook length is correct.
In this way, we can recover all the principal hook lengths.

\subsection{The Backward Pass}
\label{sec:backward}

Knowing the principal hook lengths of~$\lambda$ does not, in general,
determine~$\lambda$ uniquely, because there can be many
different hooks with the same hook length.
Our overall strategy for determining~$\lambda$
will be to recover the principal hooks themselves (not just their lengths)
inductively, starting with the \emph{last} (or innermost) principal hook,
and working backward one principal hook at a time.
Each principal hook will be recovered in two steps;
first, we will recover the overhang lengths $a_i$ and~$b_i$,
and then (if $a_i \ne b_i$) we will recover which of $a_i$ and~$b_i$ is
the arm overhang length and which is the leg overhang length.

In this backward pass,
we claim that we may assume without loss of generality that
we know $\lambda$ completely except that we are unsure about
the shape of the 1st principal hook.
To see this, suppose that we know only the shapes of
the $j$th principal hooks of~$\lambda$
for $j$ greater than some value~$j_0$,
and we want to recover the $j_0$th principal hook of~$\lambda$.
Consider what happens if $\alpha_i = h_i$ for all $i<j_0$.
If we query the value of $\chi_\lambda(\alpha)$,
then by the same kind of argument we gave in
Section~\ref{sec:forward},
each of the first $j_0-1$ border strips is forced to cover
the corresponding principal hook entirely.
In effect, we are querying a smaller shape $\mu$---one that has been
obtained from~$\lambda$ by ``stripping off'' the first $j_0-1$ principal hooks,
and that we know completely except for its 1st principal hook.
In other words, given a sequence of queries for~$\mu$
that recover its 1st principal hook, we can simply prepend
$\alpha_i = h_i$ for all $i<j_0$ to each of these queries;
this will give us a sequence of queries for~$\lambda$
that allow us  to recover its $j_0$th principal hook.

Our description of the backward pass,
and our proof of its correctness,
proceeds by induction on the number of principal
hooks of~$\lambda$.
The induction step---which assumes that
the exact shapes of all the principal hooks~$H_i$
with $i\ge 2$ are known,
and shows how to recover the principal hook~$H_1$---is
the topic of Section~\ref{sec:firsthook}.

Let us now establish the base case,
when we know that $\lambda$ is a hook, and we know its length~$h_1$,
but we do not know its precise shape.
Let $\lambda_1$ denote the number of boxes in the first row of~$\lambda$.
If we let $\alpha_i=1$ for $1\le i\le n$, then
every BST of shape~$\lambda$ and type~$\alpha$ has positive sign,
and Theorem~\ref{thm:murnaghan-nakayama} implies that
\begin{equation*}
\chi_\lambda(\alpha) = \binom{n-1}{\lambda_1-1},
\end{equation*}
because we may choose any $\lambda_1 - 1$ of the numbers
in the set $\{2,3,\ldots, n\}$ to place in the first row
of our BST, and everything else about the BST
is uniquely determined.
For fixed $n$, the binomial coefficients $\binom{n-1}{r}$ are distinct,
except that $\binom{n-1}{r} = \binom{n-1}{n-1-r}$.
Therefore once we know $\chi_\lambda(\alpha)$,
we know that our shape is either $\lambda$ or its \dop~$\hat\lambda$.
If $\lambda=\hat\lambda$ (i.e., $\lambda$ is self-conjugate)
then we are done.
Otherwise, we define
\begin{equation*}
\beta_i :=
\begin{cases}
2, &\mbox{if $i=1$;}\\
1, &\mbox{if $2\le i \le n-1$.}
\end{cases}
\end{equation*}
We query the value of $\chi_\lambda(\beta)$.
There are two types of BSTs of shape~$\lambda$
and type~$\beta$, depending on whether the first border strip
is arranged horizontally or vertically;
in the former case, the first border strip has height~$0$
so the BST has positive sign,
whereas in the latter case, the first border strip has height~$1$
so the BST has negative sign.
A straightforward application of
Theorem~\ref{thm:murnaghan-nakayama} yields
\begin{equation*}
\chi_\lambda(\beta) = \binom{n-2}{\lambda_1-2} - \binom{n-2}{\lambda_1 - 1}.
\end{equation*}
On the other hand, $\chi_{\hat\lambda}(\beta) = - \chi_{\lambda}(\beta)$.
Therefore, as long as $\chi_\lambda(\beta)\ne 0$,
we can distinguish between $\lambda$ and~$\hat\lambda$
just by examining whether the answer to our query is positive or negative.
But the only way that $\chi_\lambda(\beta)$ can be zero is if
$(\lambda_1 - 2) + (\lambda_1 - 1) = n-2$, which can happen only if $\lambda$
is self-conjugate---a case that we already dealt with above.

\section{Recovering the First Principal Hook}
\label{sec:firsthook}

For the remainder of this paper, we make the 
following standing assumptions: $\lambda$ is a partition
with principal hook lengths $h_i$
and overhang lengths $a_i$ and~$b_i$, where $a_i\le b_i$.
We assume that $\lambda$ has exactly $k+1$ nonempty principal hooks,
with $k\ge 1$.
We further assume that we know all the~$h_i$
as well as the exact shape formed by
all the principal hooks~$H_i$ with $i\ge 2$.

\subsection{Determining the Overhang Lengths
\texorpdfstring{$\bm{a_1}$}{a1}
and \texorpdfstring{$\bm{b_1}$}{b1}}
\label{sec:overhang}

Our goal in this section is to explain how to determine
the overhang lengths $a_1$ and~$b_1$.
We define compositions~$\alpha^{(i)}$ of~$n$ as follows:
\begin{equation*}
\alpha_j^{(i)} = 
\begin{cases}
h_1 - i, &\mbox{if $j=1$;} \\
h_2 + i, &\mbox{if $j=2$;} \\
h_j,     &\mbox{if $j\ge3$.}
\end{cases}
\end{equation*}
We submit the queries
$\alpha^{(1)}, \alpha^{(2)}, \alpha^{(3)}, \ldots$
successively to the oracle,
stopping as soon as we encounter a nonzero value.

Suppose that for some $i\ge 1$, there exists a BST~$T$
of shape~$\lambda$ and type~$\alpha^{(i)}$.
Lemma~\ref{lem:firstk} implies that the 2nd border strip
must be contained within the first two principal hooks,
and because $\alpha^{(i)}_2 > h_2$,
the 2nd border strip must contain some---in fact,
exactly~$i$---boxes from the 1st principal hook.
Let $x$ be the box in the 1st principal hook
adjacent to the overhang of length~$a_1$,
and let $y$ be the box in the 1st principal hook
adjacent to the overhang of length~$b_1$.
Then the 1st border strip cannot cover \emph{both}
$x$ and~$y$, because then the 2nd border strip
(which must in particular contain the box $(2,2)$)
would not be able to contain any boxes from the 1st principal hook.
See for example Figure~\ref{fig:toolarge},
where the 1st border strip
(which we have colored red,
omitting the 1s in the boxes to avoid clutter)
contains both $x$ and~$y$, and thus blocks
the 2nd border strip (in orange) from containing any boxes from
the 1st principal hook.

\begin{figure}[!ht]
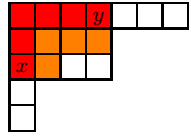

\begin{center}
\begin{ytableau}
*(red) & *(red) & *(red) & *(red) y & & & \\
*(red) & *(orange) & *(orange) & *(orange) \\
*(red) x & *(orange) &  & \\
\  \\
\  \\
\end{ytableau}
\end{center}
\caption{1st Border Strip Contains Both $x$ and $y$}
\label{fig:toolarge}
\end{figure}
But if $i\le a_1$ then the 1st border strip
is so long that it is forced to cover both $x$ and~$y$.
Hence if $i\le a_1$, then $\chi_\lambda(\alpha^{(i)}) = 0$.

Conversely, suppose that $i = a_1+1$.
Then there is a BST of shape~$\lambda$
and type~$\alpha^{(i)}$ in which
\begin{itemize}
\item 
the 1st border strip covers everything in the 1st principal hook
except $x$ and the overhang adjacent to~$x$;
\item 
the 2nd border strip covers $x$, the overhang adjacent to~$x$,
and the entire 2nd principal hook; and
\item 
for $j>2$, the $j$th border strip covers the entire $j$th principal hook.
\end{itemize}
\noindent See for example Figure~\ref{fig:alphaa1}.

\begin{figure}[!ht]
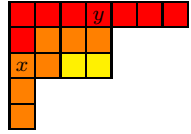

\begin{center}
\begin{ytableau}
*(red) & *(red) & *(red) & *(red) y & *(red) & *(red) & *(red) \\
*(red) & *(orange) & *(orange) & *(orange) \\
*(orange) x & *(orange) & *(yellow) & *(yellow) \\
*(orange)  \\
*(orange)  \\
\end{ytableau}
\end{center}
\caption{Border-Strip Tableau of Type $\alpha^{(a_1+1)}$}
\label{fig:alphaa1}
\end{figure}
Moreover, if $a_1 < b_1$, then this is the \emph{only}
BST of shape~$\lambda$ and type~$\alpha^{(a_1+1)}$,
because any other placement of the 1st border strip
would cover both $x$ and~$y$,
and once the 1st border strip is placed,
the areas of the border strips
(together with the constraint imposed by Lemma~\ref{lem:firstk})
force a unique placement of the remaining border strips.
Therefore, $\chi_\lambda(\alpha^{(a_1+1)}) = \pm 1$,
and $i = a_1+1$ is the smallest value of~$i$
such that $\chi_\lambda(\alpha^{(i)}) \ne 0$.

On the other hand, if $a_1 = b_1$, then there is
a second BST of shape~$\lambda$ and type~$\alpha^{(a_1+1)}$,
where the 1st border strip covers everything in the 1st
principal hook except~$y$ and the overhang adjacent to~$y$.
In the case of one of these two BSTs, the heights of the first two
border strips are $\lambda_1'-1$ and $\lambda_1'-a_1-1$,
and in the other case, the heights of the first two border strips
are $\lambda_1'-a_1-2$ and $\lambda_1'-2$.
Therefore the two BSTs have the same sign, so
$\chi_\lambda(\alpha^{(a_1+1)}) = \pm 2$,
and again $i = a_1+1$ is the smallest value of~$i$
such that $\chi_\lambda(\alpha^{(i)}) \ne 0$.

Once the value of $a_1$ is determined, the value of~$b_1$ is also
determined, since $h_1 = h_2 + a_1 + b_1 + 2$.

\subsection{Distinguishing \Dop s}
\label{sec:dop}

Our goal in this section is to
construct a query~$\alpha$ that distinguishes
between $\lambda$ and its \dop; i.e., such that
$\chi_\lambda(\alpha) \ne \chi_{\hat\lambda}(\alpha)$.
We assume that $a_1 \ne b_1$,
since otherwise $\lambda = \hat\lambda$.

\begin{definition}

The \emph{second imbalance} of~$\lambda$,
denoted $I(\lambda)$, is the smallest integer $i\ge 2$
such that $a_i \ne b_i$.
If $a_i = b_i$ for $2\le i \le k+1$,
then we set $I(\lambda) := \infty$.
\end{definition}

The reason for calling $I(\lambda)$ the ``second''
imbalance is that the first imbalance is always~$1$,
since we have assumed that $a_1 \ne b_1$.
Figure~\ref{fig:imbalance5} illustrates
a partition with $k=4$ and $I(\lambda) = 5$.

\begin{figure}[!ht]
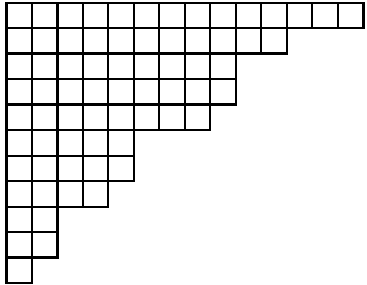

\begin{center}
\begin{ytableau}
\ & & & & & & & & & & & & & \\
\ & & & & & & & & & &       \\
\ & & & & & & & &           \\
\ & & & & & & & &           \\
\ & & & & & & &             \\
\ & & & &                   \\
\ & & & &                   \\
\ & & &                     \\
\ &                         \\
\ &                         \\
\ 
\end{ytableau}
\end{center}
\caption{Partition with $k=4$ and $I(\lambda) = 5$}
\label{fig:imbalance5}
\end{figure}

Throughout this section,
our queries will always start off the same way,
as described in the following crucial definition.

\begin{definition}

Let $m := \min(I(\lambda),k+1) - 1$,
or in other words let $m=k$ if $I(\lambda)=\infty$
and $m=I(\lambda)-1$ if $I(\lambda)<\infty$.  Define
\begin{equation}
\label{eq:alphalambda}
\alpha_i(\lambda) =
\begin{cases}
h_1 - a_1 - 1,         &\mbox{if $i=1$;} \\
h_i - a_i + a_{i-1},   &\mbox{if $2 \le i \le m$.}
\end{cases}
\end{equation}
\end{definition}

\subsubsection{The Greedy Arrangement}

 As we now explain, there is always a way to create
a partial border-strip tableau of shape~$\lambda$
in which the $i$th border strip has area $\alpha_i(\lambda)$,
for $1 \le i \le m$.
Namely, let us place the 1st border strip so that it
entirely covers the 1st long overhang.
The area of the 1st border strip is
$\alpha_1(\lambda) = h_1 - a_1 - 1$,
which is less than $h_1$, so it is short enough to
fit inside the 1st principal hook.
We must also check that it is long enough to reach
the box in the $(1,1)$ position,
since in any BST,
the 1st border strip must cover $(1,1)$.
Because its area is $h_1 - a_1 - 1$,
what the 1st border strip leaves uncovered (inside the 1st principal hook)
is the 1st short overhang and
the box adjacent to the 1st short overhang---call this box~$x_1$.
Since $\lambda$ has at least~2 nonempty principal hooks,
$x_1$ is \emph{not} in the $(1,1)$ position,
so the 1st border strip does indeed cover $(1,1)$.
See the red border strip in Figure~\ref{fig:greedy} for an example.

\begin{figure}[!ht]
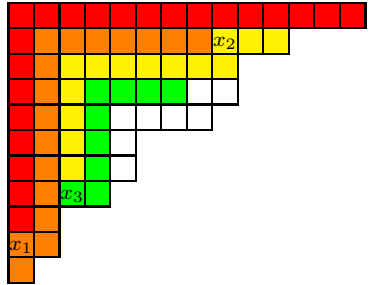

\begin{center}
\begin{ytableau}
*(red) & *(red) & *(red) & *(red) & *(red) & *(red) & *(red) & *(red)
   & *(red) & *(red) & *(red) & *(red) & *(red) & *(red) \\
*(red) & *(orange) & *(orange) & *(orange) & *(orange) & *(orange)
   & *(orange) & *(orange) & *(yellow) x_2 & *(yellow) & *(yellow) \\
*(red) & *(orange) & *(yellow) & *(yellow) & *(yellow) & *(yellow)
   & *(yellow) & *(yellow) & *(yellow)    \\
*(red) & *(orange) & *(yellow) & *(green) & *(green) & *(green) & *(green) & & \\
*(red) & *(orange) & *(yellow) & *(green) & & & &             \\
*(red) & *(orange) & *(yellow) & *(green) &                   \\
*(red) & *(orange) & *(yellow) & *(green) & \\
*(red) & *(orange) & *(green) x_3 & *(green)  \\
*(red) & *(orange)    \\
*(orange) x_1 & *(orange) \\
*(orange) 
\end{ytableau}
\end{center}
\caption{Greedy Arrangement}
\label{fig:greedy}
\end{figure}

 If $m \ge 2$, then let us place the 2nd border strip
so that it entirely covers the 1st short overhang.  Because $x_1$ is
\emph{not} in the 1st short overhang, it is adjacent to some box in
the 2nd principal hook; indeed, it is adjacent to one end of the 2nd
principal hook.  Thus the 2nd border strip can ``spill over'' into the
2nd principal hook; indeed, it spills over by precisely
\begin{equation*}
\alpha_2(\lambda) - (a_1 + 1) = h_2 - a_2 + a_1 - (a_1 + 1) = h_2 - a_2 - 1
\end{equation*}
boxes.
Now since $m \ge 2$,
the 2nd arm overhang and the 2nd leg overhang both have length~$a_2$,
and there are at least~3 nonempty principal hooks.
Arguing as we did for the 1st border strip,
we conclude that what the 2nd border strip leaves uncovered
(in the 2nd principal hook)
is the overhang at the opposite end of the 2nd principal hook from~$x_1$,
plus the box (call it~$x_2$) adjacent to that overhang.
Moreover, positioning the 2nd border strip in this fashion
does indeed cover the $(2,2)$ box, as it is required to do.
See the orange border strip in Figure~\ref{fig:greedy}.

The placements of the remaining border strips
(if they exist) follow the same recipe.  The $i$th border
strip is positioned to cover the boxes in the $(i-1)$st
principal hook that are \emph{not} covered by the
$(i-1)$st border strip, namely the uncovered overhang
and the box~$x_{i-1}$ adjacent to that overhang.
The border strip then ``spills over'' into the $i$th
principal hook, covering everything except the overhang
at the opposite end of the $i$th principal hook and
the box~$x_i$ adjacent to that overhang.  This recipe works
even when an overhang is empty, as illustrated by
the green border strip in Figure~\ref{fig:greedy}.
We formalize this construction with the following definition.

\begin{definition}
Let $(\alpha_i(\lambda))_{i=1,\ldots,m}$ be
given by Equation~\eqref{eq:alphalambda}.
A BST of shape~$\lambda$
is called \emph{greedy} if the first $m$ border strips
have areas $(\alpha_i(\lambda))_{i=1,\ldots,m}$ and
are arranged in the manner described in the preceding paragraphs.
It is called \emph{non-greedy} if the first $m$ border strips
have areas $(\alpha_i(\lambda))_{i=1,\ldots,m}$ but
some of those first $m$ border strips are arranged differently.
\end{definition}

In Sections \ref{subsec:nongreedy} to \ref{subsec:Ik1},
we deal with the case when $I(\lambda) < \infty$.
In a nutshell, our algorithm will submit queries~$\alpha$
whose first few values are given by Equation~\eqref{eq:alphalambda},
and we will argue that, for the values of~$\alpha$ that we choose,
\begin{itemize}
\item 
the number of non-greedy BSTs of shape~$\lambda$
and type~$\alpha$
equals the number of non-greedy BSTs of shape~$\hat\lambda$
and type~$\alpha$, and
\item 
the number of greedy BSTs of shape~$\lambda$ and type~$\alpha$
has the opposite parity from 
the number of greedy BSTs of shape~$\hat\lambda$ and type~$\alpha$.
\end{itemize}
Given these two facts, it follows that,
regardless of the signs of the BSTs,
$\chi_\lambda(\alpha) \not\equiv \chi_{\hat\lambda}(\alpha) \pmod 2$,
and therefore
$\chi_\lambda(\alpha) \ne \chi_{\hat\lambda}(\alpha)$.

Finally, in Section \ref{subsec:Iinfty}, we deal with
the case $I(\lambda)=\infty$.

\subsubsection{Non-Greedy Border-Strip Tableaux}
\label{subsec:nongreedy}

In what follows, we will overload the notation~$\alpha_i$,
using it to mean both the $i$th part of a composition~$\alpha$,
as well as the function given by Equation~\eqref{eq:alphalambda}.
There should be no confusion because we will only be considering
compositions~$\alpha$ whose first few parts coincide with the
values given in Equation~\eqref{eq:alphalambda}.

Let us make some general observations about non-greedy BSTs.
Let $\alpha$ be a composition of~$n$ with
$\alpha_i = \alpha_i(\lambda)$ for $1\le i \le m$.
Let $T$ be a non-greedy BST of shape~$\lambda$
and type~$\alpha$.
Let $i\le m$ be the smallest number such that
the $i$th border strip is \emph{not} positioned as the
greedy arrangement dictates.
We claim that the only other allowable positions of
the $i$th border strip are \emph{slides} of its greedy position,
meaning that we remove some boxes from one end of the border strip
and append the same number of boxes to the other end;
if we move $j\ge 1$ boxes from one end to the other then
we call the resulting border strip the \emph{$j$th slide}.
For example, if $\lambda=(14,11,9,9,8,5,4,4,2,2,1)$,
then Figure~\ref{fig:nongreedy} illustrates
the two possible slides of the 1st border strip if $i=1$.

\begin{figure}[!ht]
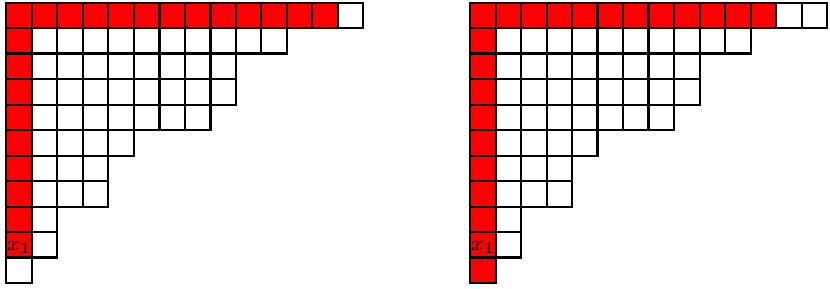

\begin{center}
\begin{ytableau}
*(red) & *(red) & *(red) & *(red) & *(red) & *(red) & *(red) & *(red)
   & *(red) & *(red) & *(red) & *(red) & *(red) & \\
*(red) &  &  &  &  & 
   &  &  &  &  &  \\
*(red) &  &  &  &  & 
   &  &  &     \\
*(red) &  &  &  &  &  &  & & \\
*(red) &  &  &  & & & &             \\
*(red) &  &  &  &                   \\
*(red) &  &  &  \\
*(red) &  &  &   \\
*(red) &     \\
*(red) x_1 &  \\
\ 
\end{ytableau}
\hskip 0.5in
\begin{ytableau}
*(red) & *(red) & *(red) & *(red) & *(red) & *(red) & *(red) & *(red)
   & *(red) & *(red) & *(red) & *(red) & & \\
*(red) &  &  &  &  & 
   &  &  &  &  &  \\
*(red) &  &  &  &  & 
   &  &  &     \\
*(red) &  &  &  &  &  &  & & \\
*(red) &  &  &  & & & &             \\
*(red) &  &  &  &                   \\
*(red) &  &  &  \\
*(red) &  &  &   \\
*(red) &     \\
*(red) x_1 &  \\
*(red)
\end{ytableau}
\end{center}
\caption{Non-Greedy Arrangements of 1st Border Strip}
\label{fig:nongreedy}
\end{figure}

 Figure~\ref{fig:nongreedy2} illustrates, for the same~$\lambda$,
the 1st and 3rd slides of the 2nd border strip if $i=2$.

\begin{figure}[!ht]
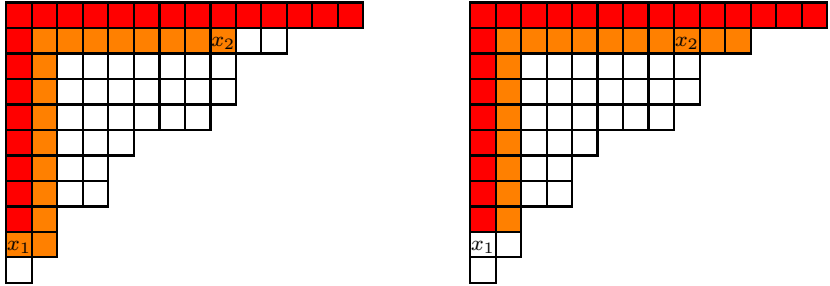

\begin{center}
\begin{ytableau}
*(red) & *(red) & *(red) & *(red) & *(red) & *(red) & *(red) & *(red)
   & *(red) & *(red) & *(red) & *(red) & *(red) & *(red) \\
*(red) & *(orange) & *(orange)  & *(orange)  & *(orange)  & *(orange) 
   &  *(orange) & *(orange)  & *(orange) x_2 &  &  \\
*(red) & *(orange)   &  &  &  & 
   &  &  &     \\
*(red) & *(orange)   &  &  &  &  &  & & \\
*(red) & *(orange)   &  &  & & & &             \\
*(red) & *(orange)   &  &  &                   \\
*(red) & *(orange)   &  &  \\
*(red) & *(orange)   &  &   \\
*(red) & *(orange)  \\
*(orange) x_1 & *(orange)  \\
\ 
\end{ytableau}
\hskip 0.5in
\begin{ytableau}
*(red) & *(red) & *(red) & *(red) & *(red) & *(red) & *(red) & *(red)
   & *(red) & *(red) & *(red) & *(red) & *(red) & *(red) \\
*(red) & *(orange) & *(orange)  & *(orange)  & *(orange)  & *(orange) 
   &  *(orange) & *(orange)  & *(orange) x_2 & *(orange) & *(orange) \\
*(red) & *(orange)   &  &  &  & 
   &  &  &     \\
*(red) & *(orange)   &  &  &  &  &  & & \\
*(red) & *(orange)   &  &  & & & &             \\
*(red) & *(orange)   &  &  &                   \\
*(red) & *(orange)   &  &  \\
*(red) & *(orange)   &  &   \\
*(red) & *(orange)  \\
x_1 & \\
\ 
\end{ytableau}
\end{center}
\caption{1st and 3rd Slides
of 2nd Border Strip}
\label{fig:nongreedy2}
\end{figure}

To see why slides are the only allowable positions
of the $i$th border strip, recall that Lemma~\ref{lem:firstk}
implies that the first $i$ border strips must lie within the first
$i$ principal hooks.
The only available boxes for
the $i$th border strip are in the $(i-1)$st and
$i$th principal hooks,
because the greedy arrangement entirely fills all the
earlier principal hooks.

 We now claim that the only boxes in the $i$th principal hook
of~$T$ that are not covered by the $i$th border strip
must lie in the $i$th arm overhang or the $i$th leg overhang.

To see why the claim is true if $i=1$,
note that every slide of the 1st border strip
necessarily covers the box~$x_1$,
and any uncovered boxes in the 1st principal hook
lying beyond~$x_1$ must be in the 1st short overhang.
On the other hand, 
even if the 1st border strip is slid as far as possible,
so that it covers the entire 1st short overhang,
it will leave uncovered at most $a_1 + 1 \le b_1$ boxes,
which must therefore all lie in the 1st long overhang.

A similar argument establishes the claim if $i>1$.
For example, if $i=2$, then every slide of the 2nd border strip
covers~$x_2$, and any uncovered boxes in the 2nd principal hook
lying beyond~$x_2$ must be in an overhang.
On the other hand, even if the 2nd border strip is slid as far
as possible, it will leave uncovered
\begin{equation*}
h_2 - (h_2 - a_2 + a_1) = a_2 - a_1 \le a_2
\end{equation*}
boxes at one end of the 2nd principal hook,
so these boxes must all lie in an overhang.

In fact, we can say a little more.
Consider the set~$S$ of boxes in the first $i$ principal hooks
that are \emph{not} covered by the first $i$ border strips.
There are $j$ boxes in~$S$ that were covered in the greedy
arrangement and that are now uncovered as a result of the slide;
call this set of boxes the \emph{outer island}
(see for example the blue boxes in Figure~\ref{fig:bijection}).
The outer island comprises some boxes in an $(i-1)$st overhang,
plus possibly $x_{i-1}$ and some boxes in the adjacent $i$th overhang.
The remaining boxes in~$S$ lie in the other $i$th overhang,
at the other end of the $i$th border strip;
call this set of boxes the \emph{inner island}
(see for example the violet boxes in Figure~\ref{fig:bijection};
note that the inner island may be empty).
The reason we call these sets ``islands''
is that they are disconnected from each other
and from all later principal hooks;
i.e., no border strip can contain both a box from an
(inner or outer) island and a box from the $i'$-th principal
hook for $i'>i$,
because they are disconnected from each other by
the $i$th border strip.
Finally, note also that neither island can contain
a box from the principal diagonal;
boxes in an overhang can never lie on the principal diagonal,
so the only worry is that $x_{i-1}$ might lie on the principal diagonal,
but this can happen only if $x_{i-1}$ is on the innermost hook,
which is not possible because $x_{i-1}$ is on the
$(i-1)$st principal hook
and there are at least $i$ principal hooks.

\begin{lemma}
\label{lem:nongreedy1}

If $\alpha$ is a composition of~$n$ with
$\alpha_i = \alpha_i(\lambda)$ for $1\le i \le m$, then the number of
non-greedy BSTs of shape~$\lambda$ and type~$\alpha$
is equal to the number of
non-greedy BSTs of shape~$\hat\lambda$ and type~$\alpha$.
\end{lemma}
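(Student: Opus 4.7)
The plan is to construct an explicit bijection $\phi$ sending non-greedy BSTs of shape $\lambda$ and type $\alpha$ to non-greedy BSTs of shape $\hat\lambda$ and type $\alpha$. The structural key is that $\lambda$ and $\hat\lambda$ agree on every principal hook $H_i$ with $i \ge 2$ and on every overhang pair $(a_i, b_i)$; they differ only in whether the long 1st overhang lies on the arm or on the leg. Consequently, for each first-deviation position $i$ and each slide amount $j$, the outer and inner islands in $\lambda$ and in $\hat\lambda$ have identical cardinalities, so the counts on the two sides ought to match.

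Given a non-greedy BST $T$ of $\lambda$ with first deviation at $i$ and slide amount $j$, I would define $\phi(T)$ to be the non-greedy BST of $\hat\lambda$ with the same $i$ and $j$, as follows. The first $i$ border strips of $\phi(T)$ are placed in their mirror-image positions: strips $1, \ldots, i-1$ are in $\hat\lambda$'s greedy configuration, and the $i$th strip is slid by $j$ in the analogous direction. The remaining border strips then partition, by the disconnectedness property established in the preceding paragraphs, into three groups that tile the outer island, the inner island, and $H_{i+1} \cup \cdots \cup H_{k+1}$ respectively; I would carry this partition over to $\phi(T)$ and transfer each sub-tiling across the obvious identification of the regions. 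Each island is a one-dimensional strip of boxes in a single row or column of the Young diagram, so its border-strip tilings of given sizes are in natural order-preserving bijection with those of any strip of equal length. Meanwhile, the later-hook region $H_{i+1} \cup \cdots \cup H_{k+1}$ is literally identical in $\lambda$ and $\hat\lambda$, so its tiling is transferred verbatim. Invertibility is automatic because $\hat{\hat\lambda} = \lambda$.

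The main obstacle I expect is verifying that the weakly-increasing label condition on a BST is preserved by $\phi$, since that condition couples boxes along each row and column and could in principle tie an island to a later hook. The resolution I have in mind is that, by construction, each island either occupies a row or column disjoint from every later principal hook (which happens whenever the island sits in the free part of an overhang), or the only later-hook boxes in the relevant row or column are covered by the $i$th border strip itself, whose label is fixed and strictly smaller than every label that appears inside an island. Either way, the cross-region constraints are satisfied automatically, so the label condition on $T$ transfers to $\phi(T)$, making $\phi$ a genuine bijection and thereby proving the lemma.
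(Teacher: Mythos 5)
Your overall strategy mirrors the paper's: construct a bijection between non-greedy BSTs of shape $\lambda$ and shape $\hat\lambda$ by identifying the first deviating border strip $i$ and the slide amount $j$, reproducing the same $i$ and $j$ on the $\hat\lambda$ side, and then transferring the tiling of the uncovered remainder region by region. However, your key justification for the transfer of the island sub-tilings rests on a false claim: the outer island is \emph{not} in general ``a one-dimensional strip of boxes in a single row or column.'' When $j > a_{i-1}$, the outer island contains the entire uncovered $(i-1)$st overhang, the corner box $x_{i-1}$, \emph{and} some boxes of the adjacent $i$th overhang, forming an L-shape that straddles two consecutive rows and two consecutive columns. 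The paper's own Figure~\ref{fig:nongreedy2} (right panel, the $3$rd slide) shows exactly this: the three uncovered boxes at the lower left occupy positions $(11,1)$, $(10,1)$, $(10,2)$, which is an L, not a strip. Consequently, the step ``its border-strip tilings of given sizes are in natural order-preserving bijection with those of any strip of equal length'' does not apply as stated, and without a correct identification of the island regions the bijection is not established.

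The fix---which is what the paper actually does---is to observe that the outer (and inner) island of $\lambda$ and the corresponding island of $\hat\lambda$ are \emph{transposes} of one another (rows $i-1,i$ become columns $i-1,i$ and vice versa), and that border-strip tilings of any region are in bijection with border-strip tilings of its transpose, since transposing a border strip yields a border strip of the same size and the weakly-increasing condition is preserved under transposition. Once you replace ``translate between strips of equal length'' with ``transpose the island tiling,'' your argument matches the paper's and the proof goes through. Your discussion of why the label condition does not couple an island to a later hook is in the right spirit (islands sit in overhang rows/columns, which later hooks never enter), though one minor slip there: the boxes in a row or column through an island that are not in the island are covered by the first $i$ border strips (labels $\le i$), not specifically by the $i$th border strip sitting in a later hook---the $i$th strip never enters hooks beyond $H_i$.
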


\begin{proof}

We describe a bijection between
non-greedy BSTs of shape~$\lambda$ and type~$\alpha$ and
non-greedy BSTs of shape~$\hat\lambda$ and type~$\alpha$.
As above, if $T$ is a non-greedy BST of shape~$\lambda$
and type~$\alpha$, we let $i$ be the smallest number such that
the $i$th border strip is \emph{not} positioned greedily;
the $i$th border strip in~$T$ is then the $j$th slide for some~$j$.
To construct the corresponding BST~$\hat T$
of shape~$\hat \lambda$, we proceed as follows:
\begin{enumerate}
\item \label{item:bijection1} 
Arrange the first $i-1$ border strips of~$\hat T$ greedily.
\item \label{item:bijection2} 
Put the $i$th border strip of $\hat T$ in the $j$th slide position.
\item \label{item:bijection3} 
For each remaining border strip, if, in~$T$,
it lies in the inner or outer island, then in~$\hat T$,
put it in the \emph{transposed} location (explained below).
Otherwise, put it in the same place that it appears in~$T$.
\end{enumerate}
Step~\ref{item:bijection3} requires further explanation.
Suppose without loss of generality that the 1st long overhang
of~$\lambda$ is its 1st arm overhang (as opposed to the 1st leg overhang),
so that the 1st long overhang of~$\hat\lambda$ is its 1st leg overhang.
Then in the greedy arrangement of the first $i-1$ border strips of~$T$,
an extreme end of the 1st, 2nd, 3rd, 4th, $\ldots$
border strips will be at the end of an arm, leg, arm, leg, $\ldots$
respectively, whereas
in the greedy arrangement of the first $i-1$ border strips of~$\hat T$,
an extreme end of the 1st, 2nd, 3rd, 4th, $\ldots$
border strips will be at the end of a leg, arm, leg, arm, $\ldots\,$.
Therefore if the $i$th border strip of~$T$ is slid by~$j$ from the
end of an arm, then the $i$th border strip of~$\hat T$ will be slid
by~$j$ from the end of a leg, and vice versa.
The inner and outer islands in $T$ and~$\hat T$
will therefore be identical except that
they will be transposed---the $(i-1)$st and $i$th rows
in~$T$ will become the $(i-1)$st and $i$th columns in~$\hat T$,
and vice versa.  See Figure~\ref{fig:bijection} for an example with $i=2$,
where the outer and inner islands
are colored blue and violet respectively.

\begin{figure}[!ht]
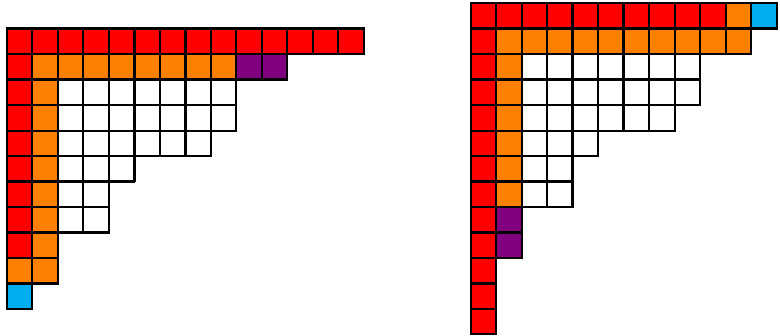

\begin{center}
\begin{ytableau}
*(red) & *(red) & *(red) & *(red) & *(red) & *(red) & *(red) & *(red)
   & *(red) & *(red) & *(red) & *(red) & *(red) & *(red) \\
*(red) & *(orange) & *(orange)  & *(orange)  & *(orange)  & *(orange) 
   &  *(orange) & *(orange)  & *(orange) & *(violet) & *(violet) \\
*(red) & *(orange)   &  &  &  & 
   &  &  &     \\
*(red) & *(orange)   &  &  &  &  &  & & \\
*(red) & *(orange)   &  &  & & & &             \\
*(red) & *(orange)   &  &  &                   \\
*(red) & *(orange)   &  &  \\
*(red) & *(orange)   &  &   \\
*(red) & *(orange)  \\
*(orange) & *(orange)  \\
*(cyan)
\end{ytableau}
\hskip 0.5in
\begin{ytableau}
*(red) & *(red) & *(red) & *(red) & *(red) & *(red) & *(red) & *(red)
   & *(red) & *(red) & *(orange) & *(cyan) \\
*(red) & *(orange) & *(orange)  & *(orange)  & *(orange)  & *(orange) 
   &  *(orange) & *(orange)  & *(orange) & *(orange) & *(orange) \\
*(red) & *(orange)   &  &  &  & 
   &  &  &     \\
*(red) & *(orange)   &  &  &  &  &  & & \\
*(red) & *(orange)   &  &  & & & &             \\
*(red) & *(orange)   &  &  &                   \\
*(red) & *(orange)   &  &  \\
*(red) & *(orange)   &  &   \\
*(red) & *(violet) \\
*(red) & *(violet) \\
*(red) \\
*(red) \\
*(red)
\end{ytableau}
\end{center}
\caption{$T$ and $\hat T$ with $i=2$ and $j=1$}
\label{fig:bijection}
\end{figure}
This is what we mean in Step~\ref{item:bijection3} above by ``transposed'';
the border strips that ultimately cover the inner and outer islands in~$T$
should be used to cover the inner and outer islands
in the transposed positions in~$\hat T$.
This procedure will never cause a violation of the
weakly-increasing property of a BST,
because the inner and outer islands
will be covered by border strips beyond the~$i$th.

Verifying that our putative bijection is indeed a bijection
is straightforward and we will not belabor the details.
\end{proof}

\subsubsection{The Case
\texorpdfstring{$\bm{I(\lambda)\le k}$}{Ik}} 
\label{subsec:Ilek}

\begin{lemma}
\label{lem:case0}

Assume that $I := I(\lambda) \le k$.
For $1 \le i < I$, let $\alpha_i = \alpha_i(\lambda)$
as defined by Equation~\eqref{eq:alphalambda}. Let
\begin{equation*}
\alpha_i :=
\begin{cases}
h_i - a_i +a_{i-1}, &\mbox{\rm {if} $i = I$;} \\
h_i + a_{i-1} + 1, &\mbox{\rm {if} $i = I+1$;} \\
h_i, &\mbox{{\rm if} $ I+1 < i \le k+1$.}
\end{cases}
\end{equation*}
Then either there is exactly one greedy BST
of shape~$\lambda$ and type~$\alpha$ and no greedy BST
of shape~$\hat \lambda$ and type~$\alpha$, or
there is exactly one greedy BST
of shape~$\hat \lambda$ and type~$\alpha$ and no greedy BST
of shape~$\lambda$ and type~$\alpha$.
\end{lemma}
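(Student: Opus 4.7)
The plan is to trace what the greedy arrangement forces the $I$th border strip to do in $\lambda$ versus $\hat\lambda$, and to show that the asymmetry between the long and short overhangs of $H_I$ lets the BST be completed in exactly one of the two cases. The greedy placement of the first $I-1$ border strips alternates the ``far end'' at which each successive principal hook is entered, with the parity of the alternation determined by which side of $H_1$ holds the 1st long overhang. Since $\lambda$ and $\hat\lambda$ swap the 1st arm and leg overhangs, the two alternations are out of phase, so the $I$th border strip of $\lambda$ is forced to enter $H_I$ from one end while the $I$th border strip of $\hat\lambda$ is forced to enter $H_I$ from the opposite end. Because $I=I(\lambda)$, the two ends of $H_I$ are asymmetric---one is the long-overhang end and the other the short-overhang end---so exactly one of $\{\lambda,\hat\lambda\}$ is a \emph{long-entry} case and the other a \emph{short-entry} case.

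In the long-entry case I claim there is exactly one greedy BST. The $I$th border strip, of area $\alpha_I=h_I-a_I+a_{I-1}$, picks up the $a_{I-1}+1$ uncovered boxes of $H_{I-1}$ together with $h_I-a_I-1$ boxes of $H_I$; forced to enter from the long end, it sweeps through the long overhang, the non-overhang portion of that arm or leg, the corner $(I,I)$, and all but the last box of the non-overhang portion of the opposite leg or arm, leaving the short overhang together with $x_I$---exactly $a_I+1$ boxes---uncovered. The $(I+1)$st border strip, of area $\alpha_{I+1}=h_{I+1}+a_I+1$, must then cover this uncovered piece together with all of $H_{I+1}$; a direct check confirms that the union is a valid border strip, resembling the L-shape of $H_{I+1}$ with a thin tail running along row or column $I$. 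For $i>I+1$, border strip $i$ has area $h_i$ and must fill $H_i$ uniquely.

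In the short-entry case, the $I$th border strip is forced to cover the short overhang, pass through $(I,I)$, and continue into the opposite arm or leg, so that the $a_I+1$ uncovered boxes of $H_I$ now lie at the extreme end of the \emph{long} overhang. These boxes are disconnected from $H_{I+1}$: by the definition of overhang, no box of the long overhang has a row- or column-neighbor in the direction of $H_{I+1}$ inside the Young diagram, and the $I$th border strip already occupies every box of $H_I$ that could have connected the uncovered strip to $H_{I+1}$. Hence any border strip meeting the uncovered strip lies entirely in it and has area at most $a_I+1$, while any border strip disjoint from it has area at most $h_{I+1}$; neither reaches $\alpha_{I+1}=h_{I+1}+a_I+1$, so no valid $(I+1)$st border strip can exist and no greedy BST of that shape is possible. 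The main obstacle will be the geometric bookkeeping---verifying the border-strip condition as tiles turn corners inside $H_I$ and $H_{I+1}$ in the long-entry case, and handling degenerate configurations in the short-entry case (for example, when $b_I=a_I+1$ so the entire long overhang is uncovered, or when the non-overhang arm or leg of $H_{I+1}$ is empty)---but each of these reduces to the same adjacency observation about overhangs.
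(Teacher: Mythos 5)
Your overall strategy is the same as the paper's: the greedy prefix enters $H_I$ at opposite ends for $\lambda$ and $\hat\lambda$, the long-entry shape admits a completion that is unique once the $I$th strip is placed flush, and the short-entry shape fails because the leftover piece of the long overhang is cut off from $H_{I+1}$. The genuine gap is in the ``exactly one'' half. A greedy BST, by definition, constrains only the first $m=I-1$ border strips; the position of the $I$th strip is \emph{not} prescribed. You repeatedly say the $I$th strip ``is forced'' to pick up the $a_{I-1}+1$ uncovered boxes of $H_{I-1}$ and sweep into $H_I$, but nothing forces this: the region available to it (the $H_{I-1}$ remnant together with $H_I$) is a path of $h_I+a_{I-1}+1$ boxes while the strip has $h_I-a_I+a_{I-1}$ boxes, so there are $a_I+2$ admissible contiguous placements (the flush one and its slides), and none of the slid placements is excluded by the weakly increasing condition alone. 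Thus in the long-entry case you have exhibited one greedy BST but not shown it is the only one; a second greedy BST would make the counts $2$ versus $0$, which have the same parity and would ruin the way this lemma is used.

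The missing step is the paper's outer-island argument: if the $I$th strip is slid by $j\ge 1$ toward the short end, then the box $y_s$ adjacent to the short overhang (your $x_I$) is covered, and the $a_I+1$ uncovered boxes in the first $I$ hooks split into two pieces---$j$ boxes at the outer end and $a_I+1-j$ boxes strictly inside the short overhang---each of size at most $a_I+1$ and each disconnected from $H_{I+1}$. By Lemma~\ref{lem:firstk} the $(I+1)$st strip lies in the first $I+1$ hooks and is connected, so it would have to sit inside one island or inside $H_{I+1}$, all of which have fewer than $h_{I+1}+a_I+1$ boxes; hence no slid placement completes to a BST, and the count is exactly one. The same remark applies, more mildly, to your short-entry case: you rule out completion only for the one placement of the $I$th strip that you call forced, whereas the lemma needs nonexistence over \emph{all} placements; fortunately your disconnection-plus-area count adapts verbatim to every slide there, since any placement of the $I$th strip covers $y_l$ (leaving $y_l$ uncovered would require leaving at least $b_I+1>a_I+1$ boxes uncovered at the long end, which is impossible). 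Adding these two checks makes your argument complete and essentially identical to the paper's.
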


\begin{proof}

Informally, the main idea is that,
depending on whether the $I$th principal hook
covers the $I$th long overhang or the $I$th short overhang
(and this in turn depends on
whether the 1st long overhang
is the arm overhang or the leg overhang),
either there will be a unique way to place all the border strips,
or the $I$th border strip
will disconnect the remainder of the shape
in such a way that it cannot be completed to a full BST.
See for example Figure~\ref{fig:case0}, where $I=4$.

\begin{figure}[!ht]
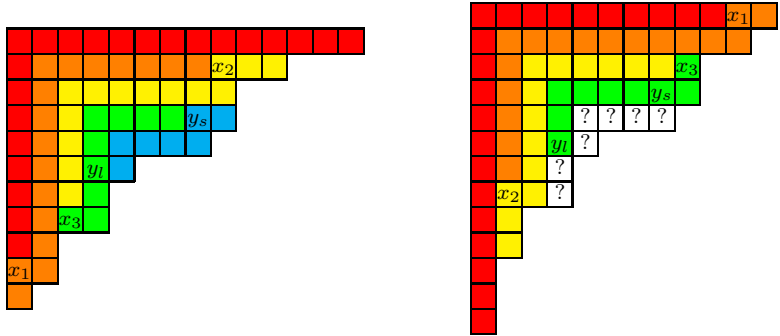

\begin{center}
\begin{ytableau}
*(red) & *(red) & *(red) & *(red) & *(red) & *(red) & *(red) & *(red)
   & *(red) & *(red) & *(red) & *(red) & *(red) & *(red) \\
*(red) & *(orange) & *(orange) & *(orange) & *(orange) & *(orange)
   & *(orange) & *(orange) & *(yellow) x_2 & *(yellow) & *(yellow) \\
*(red) & *(orange) & *(yellow) & *(yellow) & *(yellow) & *(yellow)
   & *(yellow) & *(yellow) & *(yellow)    \\
*(red) & *(orange) & *(yellow)    & *(green) & *(green) & *(green) & *(green) & *(cyan) y_s & *(cyan) \\
*(red) & *(orange) & *(yellow)    & *(green) & *(cyan) & *(cyan) & *(cyan) & *(cyan) \\
*(red) & *(orange) & *(yellow)    & *(green) y_l & *(cyan)  \\
*(red) & *(orange) & *(yellow)    & *(green) \\
*(red) & *(orange) & *(green) x_3 & *(green) \\
*(red) & *(orange)    \\
*(orange) x_1 & *(orange) \\
*(orange) 
\end{ytableau}
\hskip 0.5in
\begin{ytableau}
*(red) & *(red) & *(red) & *(red) & *(red) & *(red) & *(red) & *(red)
   & *(red) & *(red) & *(orange) x_1 & *(orange) \\
*(red) & *(orange) & *(orange) & *(orange) & *(orange) & *(orange)
   & *(orange) & *(orange) & *(orange) & *(orange) & *(orange) \\
*(red) & *(orange) & *(yellow) & *(yellow) & *(yellow) & *(yellow)
   & *(yellow) & *(yellow) & *(green) x_3   \\
*(red) & *(orange) & *(yellow) & *(green) & *(green) & *(green) & *(green) & *(green) y_s & *(green) \\
*(red) & *(orange) & *(yellow) & *(green) & ? & ? & ? & ? \\
*(red) & *(orange) & *(yellow) & *(green) y_l & ? \\
*(red) & *(orange) & *(yellow) & ? \\
*(red) & *(yellow) x_2 & *(yellow) & ? \\
*(red) & *(yellow) \\
*(red) & *(yellow) \\
*(red) \\
*(red) \\
*(red)
\end{ytableau}
\end{center}
\caption{Greedy Arrangement}
\label{fig:case0}
\end{figure}

On the left, the 4th (green) border strip
covers the 4th long overhang and there is a unique way
to complete the BST,
whereas on the right, the 4th border strip
disconnects the remainder of the shape,
rendering it impossible to cover with the single
remaining border strip of area~7.

More formally, in the Young diagrams of both $\lambda$ and $\hat\lambda$,
let $y_s$ be the box in the $I$th principal hook that is
adjacent to the $I$th short overhang, and
let $y_l$ be the box in the $I$th principal hook that is
adjacent to the $I$th long overhang.
In one of the two cases---the left-hand diagram
in Figure~\ref{fig:case0}---there will be a way to place
the $I$th border strip so that it
covers everything in the $I$th principal hook
up to but not including~$y_s$.
In effect, this is a ``greedy'' placement of the $I$th border strip.
Given this placement,
there is a unique way to place the $(I+1)$st border strip,
namely in such a way that it covers 
the $I$th short overhang, the box~$y_s$, and the
entire $(I+1)$st principal hook.
All subsequent border strips are then uniquely forced
to entirely cover the corresponding principal hooks.
On the other hand, if we attempt to place the $I$th border strip
``non-greedily'' then it must cover $y_s$,
but this will create an outer island that is disconnected from
the $(I+1)$st principal hook,
and there will be nowhere to place the $(I+1)$st border strip.

In the other case---the right-hand diagram in Figure~\ref{fig:case0}---the
$I$th border strip will cover both $y_s$ and~$y_l$,
as well as $b_I - a_I - 1$ boxes of the $I$th long overhang,
leaving $a_I + 1 > 0$ boxes in the $I$th long overhang uncovered.
We are now in trouble with the $(I+1)$st border strip,
because it must cover these $a_I + 1$ boxes
as well as the entire $(I+1)$st principal hook,
but this is impossible because these two nonempty regions
are connected only via the box~$y_l$,
which is occupied by the $I$th border strip.
So there are no BSTs in this case.

Note that it is in this final step of the argument
that we use the assumption that $I(\lambda) \le k$;
this implies that the $(I+1)$st principal hook is nonempty,
which we need for our contradiction.
\end{proof}

\subsubsection{The Case
\texorpdfstring{$\bm{I(\lambda) = k+1}$}{Ik1}} 
\label{subsec:Ik1}

As we observed in the proof of Lemma~\ref{lem:nongreedy1},
if we compare the greedy arrangements of $\lambda$ and~$\hat\lambda$,
then in one case an extreme end of the
1st, 2nd, 3rd, 4th, $\ldots$ border strips will be at the end of
an arm, leg, arm, leg, $\ldots$ respectively,
whereas in the other case an extreme end of the
1st, 2nd, 3rd, 4th, $\ldots$ border strips will be at the end of
a leg, arm, leg, arm, $\ldots$ respectively.
Since $I(\lambda)=k+1$, this process will continue until
all that is left uncovered is the final $(k+1)$st principal hook,
the box~$x_k$, and the adjacent $k$th overhang.
In one case, $x_k$ and the $k$th overhang will be attached
to the arm of the $(k+1)$st principal hook,
and in the other case, they will be attached
to the leg of the $(k+1)$st principal hook.
Motivated by this situation,
let us define an \emph{arm extension} of a hook
to be the shape obtained by attaching an additional row of boxes to the hook,
with the leftmost box of the new row directly above the box at
the top right corner of the hook.
Let us also define a \emph{leg extension} of a hook
to be the shape obtained by attaching an additional column of boxes to the hook,
with the topmost box of the new column directly to the left of the box
at the bottom left corner of the hook.
See Figure~\ref{fig:armlegextensions} for an example,
where the original hook is white and the additional row or column
has been colored red for visual clarity.

\begin{figure}[!ht]
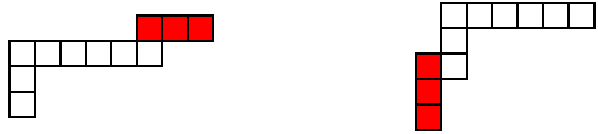

\begin{center}
\begin{ytableau}
\none & \none & \none & \none & \none & *(red) & *(red) & *(red) \\ 
 \    &       &       &       &       &        \\
 \    \\
 \
\end{ytableau}
\hskip 1truein
\begin{ytableau}
\none  &     &       &       &       &       &        \\
\none  &  \\
*(red) &  \\
*(red) \\
*(red)
\end{ytableau}
\end{center}
\caption{Arm and Leg Extensions of a Hook}
\label{fig:armlegextensions}
\end{figure}

Arm and leg extensions are defined even for \emph{improper} hooks;
i.e., hooks that consist of
just a single row or a single column. See Figure~\ref{fig:armlegextensions2}
for an example of the arm and leg extensions of an improper hook.

\begin{figure}[!ht]
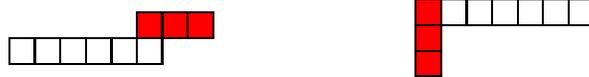

\begin{center}
\begin{ytableau}
\none & \none & \none & \none & \none & *(red) & *(red) & *(red) \\ 
 \    &       &       &       &       &        \\
\end{ytableau}
\hskip 1truein
\begin{ytableau}
*(red)  &     &       &       &       &       &        \\
*(red) \\
*(red)
\end{ytableau}
\end{center}
\caption{Arm and Leg Extensions of an Improper Hook}
\label{fig:armlegextensions2}
\end{figure}

The observations above show that
comparing the number of greedy BSTs
of type~$\alpha$ and shapes $\lambda$ and~$\hat\lambda$
reduces to comparing the number of BSTs of type~$\alpha^-$
of the arm and leg extensions\footnote{
We have not formally defined a BST of a shape
that is not a Young diagram,
but the definition is the obvious generalization.}
of the $(k+1)$st principal hook,
where $\alpha^-$ consists of the parts $\alpha_i$ for $i>k$.
Since $I(\lambda)=k+1$,
we may further assume that the $(k+1)$st principal hook
is \emph{not} self-conjugate.

\begin{lemma}

Assume that $I(\lambda) = k+1$.
For $1 \le i \le k$, let $\alpha_i = \alpha_i(\lambda)$
as defined by Equation~\eqref{eq:alphalambda}. Let
\begin{equation*}
\alpha_i :=
\begin{cases}
h_{k+1} - a_{k+1} + a_k, &\mbox{{\rm if} $i = k+1${\rm ;}} \\
a_{k+1} + 1, &\mbox{{\rm if} $i=k+2$.}
\end{cases}
\end{equation*}
Then the number of greedy BSTs of shape~$\lambda$
and type~$\alpha$ has the opposite parity from
the number of greedy BSTs of shape~$\hat\lambda$
and type~$\alpha$.
\end{lemma}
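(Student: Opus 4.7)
The plan is to build on the reduction set up immediately before the lemma. Once the first~$k$ border strips have been placed greedily in~$\lambda$, what remains uncovered is the $(k+1)$st principal hook~$H_{k+1}$ together with the box~$x_k$ and the adjacent uncovered $k$th overhang (of length~$a_k$), attached either above the top-right corner of~$H_{k+1}$ or to the left of its bottom-left corner. Which of the two attachments occurs depends on the parity of the alternation that starts from whether the 1st long overhang of~$\lambda$ is the arm or the leg; since this initial choice is reversed for~$\hat\lambda$, one of the shapes $\lambda,\hat\lambda$ produces an arm extension of~$H_{k+1}$ of size $e := a_k+1$, and the other produces a leg extension of the same size. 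Counting greedy BSTs therefore reduces to counting the two-strip tilings of these two extended shapes, using border strips of areas $\alpha_{k+1}=b_{k+1}+e$ (label~$k+1$) and $\alpha_{k+2}=a_{k+1}+1$ (label~$k+2$), and showing that the two counts have opposite parity.

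The next step exploits the fact that each extended shape is itself a single border strip---a ``snake'' that runs from the outer tip of the extension, along one side of~$H_{k+1}$, around its corner, and out along the other side. Any two-strip tiling therefore cuts this snake into a label-$(k+1)$ piece and a label-$(k+2)$ piece, subject to the weakly-increasing rule: the smaller label-$(k+2)$ strip must come ``after'' the other in every shared row and every shared column. I would then enumerate where the label-$(k+2)$ strip can possibly sit. By the border-strip property together with the row/column monotonicity constraint, I expect exactly three geometric possibilities in each shape: entirely inside the extension at its outer tip, straddling the extension and the adjacent side of~$H_{k+1}$ (and, because of the border-strip condition, forced to cover the whole extension), or entirely at the opposite extremity of~$H_{k+1}$. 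In each case, monotonicity either rules the placement out or pins it down uniquely, so every legal case contributes exactly one tiling.

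Writing $a:=a_{k+1}$ and $b:=b_{k+1}$, and letting $\alpha,\ell$ denote the arm and leg lengths of~$H_{k+1}$ (so $\{\alpha,\ell\}=\{a,b\}$), I expect the count for the arm extension to simplify to
\[
\mathbf{1}_{\,a\le a_k-1\,}\;+\;\mathbf{1}_{\,a_k+1\le a\le a_k+\alpha\,}\;+\;\mathbf{1}_{\,\ell>a\,},
\]
with the leg extension giving the same formula except that $\alpha$ and~$\ell$ are swapped in the second and third terms. The first two indicators will agree between the two shapes: the upper bounds $a\le a_k+\alpha$ and $a\le a_k+\ell$ are automatic once $a\ge a_k+1$, since $a<b$ and one of $\alpha,\ell$ equals~$b$ while the other equals~$a$. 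The third indicator, by contrast, fires for exactly one of the two shapes: $\ell>a$ is equivalent to $\ell=b$ (i.e.\ $\alpha=a$), while $\alpha>a$ is equivalent to $\alpha=b$ (i.e.\ $\ell=a$), and since $a<b$ exactly one of these holds. Consequently the two counts differ by~$\pm 1$, which is odd, giving the desired parity comparison. The step I expect to be most delicate is the row/column monotonicity check at the boundary values of the parameters---especially $a=a_k$ and the improper-hook cases $a=0$ or $\ell=0$---because these are precisely where a naively plausible placement would leave a box labeled~$k+2$ directly above a box labeled~$k+1$ in some column; carefully ruling these out is what forces the difference to be exactly $\pm 1$ rather than something larger.
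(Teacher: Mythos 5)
Your proposal is correct and follows essentially the same route as the paper: reduce (via the alternating greedy arrangement) to counting two-strip tableaux of the arm and leg extensions of the $(k+1)$st principal hook, observe that the short strip can only occupy one of the two ends of this snake-shaped region, and check legality of each placement against the weakly-increasing condition, including the boundary case $a_{k+1}=a_k$. Your indicator-function bookkeeping merely repackages the paper's ``without loss of generality'' normalization and its case split $a_k=a_{k+1}$ versus $a_k\ne a_{k+1}$ (which yields counts $0,1$ versus $1,2$), arriving at the same conclusion that the two counts differ by exactly one.
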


\begin{proof}

Let $H$ denote the $(k+1)$st principal hook.
Without loss of generality,
we may assume that the length of the first row of~$H$
exceeds the height of its first column.
For both the arm extension and the leg extension of~$H$,
we want to count the number of BSTs of type
\begin{equation*}
(h_{k+1} - a_{k+1} + a_k, a_{k+1} + 1).
\end{equation*}
In each case, the answer will be at most~2,
since the 2nd border strip must be placed at
either the upper right end or the lower left end
of the arm/leg extension;
it may be less than~2 because one or both of these putative placements
may violate the weakly increasing condition of a BST.

Note first that placing the 2nd border strip at the lower left end
of the arm extension of~$H$ is illegal,
because then it occupies precisely the entire 1st column of~$H$;
in particular, the top box of the 1st column will contain a~2,
and there will be boxes to its right that contain a~1.
On the other hand, placing the 2nd border strip at the upper right end
of the leg extension of~$H$ is legal;
since $a_{k+1} + 1 < b_{k+1} + 1$, the 2nd border strip will occupy
a proper subset of the first row of~$H$.
See Figure~\ref{fig:armlegtableaux} for an example
using the arm and leg extensions of Figure~\ref{fig:armlegextensions}.

\begin{figure}[!ht]
\begin{center}
\begin{ytableau}
\none & \none & \none & \none & \none & *(red)1 & *(red) 1 & *(red) 1 \\ 
 2    &   1   &   1   &   1   &   1   &    1   \\
 2    \\
 2
\end{ytableau}
\hskip 1truein
\begin{ytableau}
\none  &  1  &   1   &   1   &   2   &   2   &   2    \\
\none  &  1 \\
*(red) 1 &  1 \\
*(red) 1 \\
*(red) 1
\end{ytableau}
\end{center}
\caption{Illegal (left) and Legal (right) Tableaux}
\label{fig:armlegtableaux}
\end{figure}

Now we split into two cases, depending on whether $a_k = a_{k+1}$.
Suppose first that $a_k = a_{k+1}$.
Then placing the 2nd border strip at the upper right end
of the arm extension of~$H$ is illegal,
because then it occupies precisely the boxes in the additional row;
in particular, the leftmost additional box will contain a~2,
and there will be a box just below it containing a~1.
Similarly, placing the 2nd border strip at the lower left end
of the leg extension of~$H$ is illegal,
because then it occupies precisely the boxes in the additional column;
in particular, the topmost additional box will contain a~2,
and there will be a box just to the right of it containing a~1.
See Figure~\ref{fig:armlegtableaux2}.

\begin{figure}[!ht]
\begin{center}
\begin{ytableau}
\none & \none & \none & \none & \none & *(red) 2 & *(red) 2 & *(red) 2 \\ 
 1    &   1   &   1   &   1   &   1   &    1   \\
 1    \\
 1
\end{ytableau}
\hskip 1truein
\begin{ytableau}
\none  &  1  &   1   &   1   &   1   &   1   &   1    \\
\none  &  1 \\
*(red) 2 &  1 \\
*(red) 2 \\
*(red) 2
\end{ytableau}
\end{center}
\caption{Illegal Tableaux}
\label{fig:armlegtableaux2}
\end{figure}
Combining this observation with the previous observation,
we see that the arm extension admits 0 BSTs
while the leg extension admits 1 BST,
and 0 and~1 have opposite parity, as required.

Now suppose that $a_k \ne a_{k+1}$.  Then placing the 2nd border strip
at the upper right end of the arm extension will be legal,
because it will occupy either a proper subset of the additional row,
or it will occupy the entire additional row plus a proper subset of the
first row of~$H$ (it cannot occupy the entire first row of~$H$
because the first row of~$H$ has $b_{k+1} + 1 > a_{k+1} + 1$ boxes,
which is the area of the 2nd border strip,
and the 2nd border strip must cover at least one box in the additional row),
and either way, there is no violation of the
weakly increasing property.
Similarly, placing the 2nd border strip at the lower left end
of the leg extension will be legal.
So in this case, the arm extension admits 1 BST
while the leg extension admits 2 BSTs,
and 1 and~2 have opposite parity, as required.
\end{proof}

\subsubsection{The Case
\texorpdfstring{$\bm{I(\lambda)=\infty}$}{Iinfty}}
\label{subsec:Iinfty}

If $I(\lambda)=\infty$, then there is no second imbalance,
and it is not hard to see that this implies that
if the 1st principal hook is deleted from~$\lambda$,
then what remains is self-conjugate.
In particular, $\lambda' = \hat \lambda$.
We need the following standard fact.

\begin{lemma}
\label{lem:tensorsign}

For any $\lambda$ and~$\pi$,
\begin{equation*}
\chi_{\lambda}(\pi) = (\sgn\pi) \chi_{\lambda'}(\pi).
\end{equation*}
\end{lemma}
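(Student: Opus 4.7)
The plan is to invoke the Murnaghan--Nakayama rule (Theorem~\ref{thm:murnaghan-nakayama}) and exploit the fact that transposing Young diagrams sets up a natural bijection between BSTs of shape $\lambda$ and those of shape $\lambda'$. Concretely, if $T$ is a BST of shape $\lambda$ and type $\alpha$, then reflecting every box across the principal diagonal produces a tiling $T'$ of the Young diagram of $\lambda'$ by the same sequence of shapes (now each individually transposed). I would first check that the transpose of a border strip is again a border strip of the same area, and that the weakly-increasing condition on $T$ (across rows and down columns) goes over to the weakly-increasing condition on $T'$ (down columns and across rows, which is the same requirement). Thus $T \mapsto T'$ is a bijection between BSTs of shape $\lambda$ and type $\alpha$ and BSTs of shape $\lambda'$ and type $\alpha$.

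Next I would compare the signs. If a border strip $B$ has area $s$ and spans $r$ rows and $c$ columns, the defining condition of a border strip forces $s = r + c - 1$. Transposing swaps rows and columns, so the transposed strip $B'$ satisfies $h(B') = c - 1 = s - 1 - h(B)$, whence
\begin{equation*}
(-1)^{h(B')} = (-1)^{s-1}(-1)^{h(B)}.
\end{equation*}
Multiplying over all border strips in $T$, and letting $\alpha = (\alpha_1, \ldots, \alpha_\ell)$ denote the cycle-length composition of $\pi$, we get
\begin{equation*}
\prod_{B' \in T'} (-1)^{h(B')} = (-1)^{\sum_i (\alpha_i - 1)} \prod_{B \in T} (-1)^{h(B)} = (-1)^{n - \ell} \prod_{B \in T} (-1)^{h(B)}.
\end{equation*}

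Finally, I would use the standard identity $\sgn \pi = (-1)^{n-\ell}$, where $\ell$ is the number of cycles of $\pi$. Summing the previous display over all BSTs of shape $\lambda$ and type $\alpha$, and applying Theorem~\ref{thm:murnaghan-nakayama} to both $\lambda$ and $\lambda'$, yields $\chi_{\lambda'}(\pi) = (\sgn \pi)\, \chi_\lambda(\pi)$, which is equivalent to the claim since $(\sgn \pi)^2 = 1$. The only slightly delicate step is the sign bookkeeping: one must notice that the ``missing'' factor $(-1)^{s-1}$ per border strip collects into exactly the cycle-type formula for $\sgn \pi$, but everything else (the bijection and its inverse) is immediate from the symmetry of the definitions under transposition.
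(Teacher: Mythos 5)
Your proof is correct and follows essentially the same route as the paper: transposing each BST under the Murnaghan--Nakayama rule and tracking how the sign of each border strip changes, then identifying the accumulated factor with $\sgn\pi$ via the cycle lengths. Your explicit bookkeeping $h(B') = \alpha_i - 1 - h(B)$ is just a quantitative restatement of the paper's parity observation about rows and columns of a border strip, so the two arguments coincide.
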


\begin{proof}

Textbooks typically prove this fact by noting that taking the
conjugate of a partition corresponds to tensoring with the sign
representation, but as pointed out to us by Richard Stanley,
it can be easily proved directly from the Murnaghan--Nakayama rule
as follows.
It is not hard to see that a border strip of even area
must have either an odd number of rows and an even number of columns,
or an even number of rows and an odd number of columns;
similarly, a border strip with odd area
must have either an odd number of rows and an odd number of columns,
or an even number of rows and an even number of columns.
It follows that if we transpose a BST of shape~$\lambda$,
then in the resulting BST of shape~$\lambda'$,
the signs of the border strips with even area will reverse
while the signs of the border strips with odd area
will remain the same.
Now, in the disjoint-cycle decomposition of~$\pi$,
a cycle of odd length is an even permutation,
and a cycle of even length is an odd permutation.
Thus the overall sign of the BST will reverse
if and only if $\pi$ has an odd number of
cycles of even length; i.e.,
if and only if $\pi$ is an odd permutation.
Since this argument applies for every BST, regardless of type,
the lemma follows.
\end{proof}

Lemma~\ref{lem:tensorsign} implies that
any odd permutation~$\pi$
such that $\chi_\lambda(\pi) \ne 0$
has the desired property that
$\chi_\lambda(\pi) \ne \chi_{\hat\lambda}(\pi)$.
\relax From this point on, we focus on finding such a~$\pi$.

\begin{lemma}
\label{lem:exactlyone}

Assume that $I(\lambda)=\infty$.
For $1 \le i \le k$, let $\alpha_i = \alpha_i(\lambda)$
as defined by Equation~\eqref{eq:alphalambda},
and let $\alpha_{k+1} = h_{k+1} + a_k + 1$.
Then there is exactly one BST of shape~$\lambda$
and type~$\alpha$.
\end{lemma}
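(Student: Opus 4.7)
The plan is to show both existence and uniqueness of a BST of shape $\lambda$ and type $\alpha$.

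\emph{Existence.} I extend the greedy arrangement. Since $I(\lambda)=\infty$ we have $m=k$, so the greedy recipe fully determines the first $k$ border strips. An area count gives $\sum_{l=1}^{k}\alpha_l = \sum_{l=1}^{k}h_l - a_k - 1$, so exactly $a_k+1$ boxes of hooks $1,\ldots,k$ remain uncovered after the first $k$ strips: namely $x_k$ together with the opposite overhang of hook $k$ (of length $a_k$). Adjoining the untouched $H_{k+1}$ of size $h_{k+1}$ leaves a region of area $\alpha_{k+1}=h_{k+1}+a_k+1$; a direct check of the boundary confirms this region is a single valid border strip, sweeping as a ribbon from the tip of the opposite overhang of hook $k$, through $x_k$, around the corner $(k+1,k+1)$, and along the opposite side of $H_{k+1}$. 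The weakly-increasing condition is immediate.

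\emph{Uniqueness.} I argue that strip~$1$ must be placed as in the greedy arrangement; the same reasoning then cascades to force each subsequent strip. Write $A_1$ for the arm length of hook~$1$ and assume WLOG that the long overhang of hook~$1$ is the arm, so $A_1=A_2+1+b_1$. By Lemma~\ref{lem:firstk} every strip $\ell$ contains the principal-diagonal box $(\ell,\ell)$, so any strip that also contains the far-right box $(1,1+A_1)$ of hook~$1$'s arm is a ribbon of area at least $A_1+1$. A direct computation with the formulas for $\alpha_\ell$ shows $\alpha_\ell<A_1+1$ for every $\ell\ne 1$ in essentially all cases; in the narrow borderline band where $\alpha_\ell$ can equal $A_1+1$, a disconnection argument (showing that removing any such candidate ribbon splits the remainder into components whose sizes do not match the remaining $\alpha_{\ell'}$'s) rules out alternative placements. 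Hence $(1,1+A_1)$ must lie in strip~$1$; combining this with $(1,1)\in$ strip~$1$ forces strip~$1$'s row-$1$ segment to be the entire arm of hook~$1$, and the prescribed area $\alpha_1=h_1-a_1-1$ then leaves exactly $B_2$ boxes for strip~$1$'s column-$1$ segment, which is precisely the greedy configuration.

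Once strip~$1$ is greedy, the analogous argument applied to hook~$2$'s far-end boxes (which are symmetric, since $a_2=b_2$ by $I(\lambda)=\infty$) forces strip~$2$ to be greedy, and inductively each strip $\ell\le k$ must be greedy; strip $k+1$ is then determined by the existence argument, yielding uniqueness. The main obstacle I anticipate is the disconnection argument for the borderline cases of the area comparison: there is a small family of parameters (most visibly when $k$ is small or when the $a_i$ happen to be extremal) where $\alpha_\ell$ equals the minimum ribbon area $A_1+1$, and in each of these cases one must verify explicitly that a would-be non-greedy strip decomposes the remaining region in a way incompatible with the remaining strip areas. Handling these edge cases uniformly, and confirming that the cascade proceeds cleanly through all $k$ hooks while tracking the alternating side-swap of the greedy arrangement, is where the bulk of the technical work lies.
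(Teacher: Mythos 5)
Your existence half is fine and coincides with the paper's (the greedy first $k$ strips leave exactly the far $k$th overhang, the box $x_k$, and all of $H_{k+1}$, which together form one valid ribbon of area $h_{k+1}+a_k+1$). The gap is in uniqueness. Your pivotal quantitative claim, that $\alpha_\ell < A_1+1$ for every $\ell\ne 1$ ``in essentially all cases,'' is false, and the failures are not a narrow borderline band but the typical situation. Since $I(\lambda)=\infty$ forces the part of $\lambda$ inside the first hook to be self-conjugate, the arm and leg of the second hook are equal, so $\alpha_2=h_2-a_2+a_1=2A_2+1-a_2+a_1$ while $A_1+1=A_2+b_1+2$; hence $\alpha_2\ge A_1+1$ as soon as $A_2\ge b_1+1+a_2-a_1$, i.e.\ whenever the self-conjugate core is even moderately large compared with the first-hook overhangs. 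Concretely, $\lambda=(7,6,6,6,6,6)$ (a $5\times5$ core, $a_1=0$, $b_1=1$, $I(\lambda)=\infty$) has $A_1+1=7$ but $\alpha_2=h_2=9$. In such cases your area comparison does not force $(1,\lambda_1)$ into the first strip, and everything falls back on the ``disconnection argument'' that you only sketch and yourself flag as the bulk of the work; since the same issue recurs at every stage of your strip-by-strip cascade, the core of the uniqueness proof is missing. (A smaller point: ``by Lemma~\ref{lem:firstk} every strip $\ell$ contains $(\ell,\ell)$'' is not what Lemma~\ref{lem:firstk} says; it follows here only after combining it with Lemma~\ref{lem:principaldiag} and the count of $k+1$ strips versus $k+1$ diagonal boxes.)

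For comparison, the paper's uniqueness argument is a single global disconnection step rather than a box-forcing cascade: if a BST of type $\alpha$ were non-greedy, let $i\le k$ be the first non-greedily placed strip; by the analysis of Section~\ref{subsec:nongreedy} this produces a nonempty outer island that no border strip can connect to any principal hook beyond the $i$th, while Lemma~\ref{lem:firstk} keeps the first $k$ strips out of the $(k+1)$st hook entirely. So after the first $k$ strips the uncovered region has at least two connected components---the outer island and boxes of $H_{k+1}$---and the single remaining border strip, being connected, cannot cover both. Either adopt this global argument or supply, in full generality, the deferred disconnection analysis; as it stands the proposal does not prove uniqueness.
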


\begin{proof}

There certainly exists a unique greedy BST
of shape~$\lambda$ and type~$\alpha$,
whose $(k+1)$st border strip covers all the boxes
not covered by the first $k$ border strips.
The point is that there cannot be any non-greedy BSTs,
because then the first $k$ border strips would leave uncovered
at least two disconnected components---an
outer island, and some boxes in the $(k+1)$st
principal hook---which therefore
cannot both be covered by the single remaining border strip.
\end{proof}

If the~$\alpha$ described in Lemma~\ref{lem:exactlyone} is
the cycle type of an odd permutation, then we are done,
so let us assume the contrary.
Note that breaking any single cycle of an even permutation
into two nonempty cycles yields an odd permutation.
In particular, any composition~$\beta$ with exactly $k+2$ nonzero parts
where $\beta_i = \alpha_i(\lambda)$
(as defined by Equation~\eqref{eq:alphalambda}) for $i\le k$,
and whose last two parts $\beta_{k+1}$ and $\beta_{k+2}$
sum to $h_{k+1} + a_k + 1$,
is the cycle type of an odd permutation.
If we can show that
the number of BSTs of type~$\beta$ is odd,
then that will imply that $\chi_\lambda(\beta) \ne 0$,
regardless of the signs of the BSTs.

The following lemma tells us that there are some strong
constraints on what a non-greedy BST can look like.

\begin{lemma}
\label{lem:nongreedy2}

Assume that $I(\lambda)=\infty$.
For $1 \le i \le k$, let $\alpha_i = \alpha_i(\lambda)$
as defined by Equation~\eqref{eq:alphalambda},
and assume that $\alpha$ has exactly $k+2$ nonzero parts.
Let $T$ be a non-greedy BST of shape~$\lambda$ and type~$\alpha$.
Let $i\le k$ be the smallest number such that
the $i$th border strip is \emph{not} positioned greedily.
Then for $i < i' \le k$,
\begin{equation}
\label{eq:toolarge}
h_{i'} - a_{i'} + a_{i'-1} > a_i + 1
\end{equation}
and
\begin{equation}
\label{eq:equality}
a_{i'} = a_{i'-1}.
\end{equation}
Moreover, $\alpha_{k+1} = h_{k+1}$ or $\alpha_{k+2} = h_{k+1}$.
\end{lemma}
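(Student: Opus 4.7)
The plan is to combine the disconnectedness established in Section~\ref{subsec:nongreedy} with an induction on $i'$ showing that each border strip containing a diagonal box must be confined to a specific principal hook and cover it exactly. Let $B_1, \ldots, B_{k+2}$ denote the border strips of $T$. Recall from Section~\ref{subsec:nongreedy} that the slide of $B_i$ by some $j \ge 1$ leaves an outer island of size $j$ and an inner island of size $a_i - j + 1$, mutually disconnected and disconnected from $H_{i+1}, \ldots, H_{k+1}$, with neither containing a principal-diagonal box. Combining this with Lemma~\ref{lem:principaldiag}, the $k+1$ principal-diagonal boxes sit in $k+1$ distinct border strips of $T$ (with strictly increasing entries), and the first $i$ of them reside in $B_1, \ldots, B_i$. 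Write $\mu \in \{i+1, \ldots, k+2\}$ for the unique ``missing'' index such that $B_\mu$ contains no diagonal box.

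The main induction runs on $i' = i+1, \ldots, k$ with inductive claim: the strip of $T$ containing $(i', i')$ is $B_{i'}$, and $B_{i'}$ covers $H_{i'}$ exactly (equivalent to $a_{i'} = a_{i'-1}$). To identify the strip containing $(i', i')$ as $B_{i'}$ (rather than $B_{i'+1}$), I rule out $\mu = i'$. If $\mu = i'$ then $B_{i'}$ has no diagonal box, so by connectedness and the inductive hypothesis (under which $H_{i+1}, \ldots, H_{i'-1}$ have been completely covered), $B_{i'}$ lies entirely in one of four pairwise-disconnected components: the outer island, the inner island, the arm of $H_{i'}$, or the leg of $H_{i'}$. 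Using the structural identity $h_{i'} = r + r' + 2 a_{i'} + 1$ with $r, r' \ge 1$ for $i' \le k$ (since the $(i'+1)$st principal hook is nonempty), together with the inductive equality $a_{i'-1} = a_i$, the area $\alpha_{i'}(\lambda) = h_{i'} - a_{i'} + a_{i'-1}$ expands to $r + r' + a_{i'} + a_i + 1$, which strictly exceeds $a_i + 1$ (the maximum possible island size) and also exceeds each of the arm length $r + a_{i'}$ and the leg length $r' + a_{i'}$, yielding a contradiction. So $\mu \ne i'$, hence $(i', i') \in B_{i'}$ and $B_{i'} \subseteq H_{i'}$. Then $\alpha_{i'}(\lambda) \le h_{i'}$ gives $a_{i'-1} \le a_{i'}$; and any leftover $a_{i'} - a_{i'-1} > 0$ would persist as disconnected boxes at the arm or leg end of $H_{i'}$, producing three disjoint uncovered components (the leftover, the outer island, and $H_{k+1}$) that the two remaining strips $B_{k+1}, B_{k+2}$ cannot simultaneously tile. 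So $a_{i'} = a_{i'-1}$, establishing \eqref{eq:equality}; and \eqref{eq:toolarge} then reduces via $a_{i'} = a_{i'-1} = a_i$ to $h_{i'} \ge 2 a_i + 3 > a_i + 1$.

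For the final clause, after the induction concludes, only $B_{k+1}$ and $B_{k+2}$ remain to cover the outer island (nonempty), the possibly-empty inner island, and $H_{k+1}$. Because the outer island and $H_{k+1}$ are disconnected, each of $B_{k+1}, B_{k+2}$ must cover exactly one of these two components, and area accounting forces the inner island to be empty. Whichever strip covers $H_{k+1}$ has area $h_{k+1}$, yielding either $\alpha_{k+1} = h_{k+1}$ (if $\mu = k+2$) or $\alpha_{k+2} = h_{k+1}$ (if $\mu = k+1$). The main technical hurdle is the inductive area comparison: carefully showing that $\alpha_{i'}(\lambda)$ is too large to fit in any of the four disconnected alternatives available to a diagonal-free border strip of index $i'$, using the structural decomposition of $h_{i'}$.
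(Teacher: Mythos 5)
Your overall strategy matches the paper's: slide structure, outer/inner islands, the count ``$k+2$ strips versus $k+1$ diagonal boxes, so at most one diagonal-free strip,'' and an induction showing each subsequent strip must exactly fill its hook. The $\mu$-bookkeeping is a clean way to state the paper's ``at most one non-principal border strip'' observation, and the area computation $\alpha_{i'}(\lambda) = r + r' + a_{i'} + a_i + 1$ (with $r, r' \ge 1$ since $i' \le k$) ruling out every diagonal-free placement is correct.

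There is, however, a flaw in the step eliminating the leftover at stage $i'$. You argue that a leftover $a_{i'}-a_{i'-1}>0$ produces ``three disjoint uncovered components \ldots\ that the two remaining strips $B_{k+1}, B_{k+2}$ cannot simultaneously tile.'' But at stage $i'$ of the induction, the strips $B_{i'+1},\ldots,B_{k+2}$ all remain, not just the last two; you do not yet know that $B_{i'+1},\ldots,B_{k}$ will tile $H_{i'+1},\ldots,H_{k}$ exactly, since that is precisely what the later steps of the induction are supposed to establish. Invoking the ``two strips, three components'' count here is therefore circular. The correct (non-circular) argument, which is the paper's and which your own $\mu$-setup already supports, is: the outer island is nonempty, disconnected, and contains no diagonal box, so whichever strips cover it are diagonal-free and hence equal to the single strip $B_\mu$; consequently any \emph{other} nonempty region disconnected from the diagonal (the inner island, a leftover) would require a second diagonal-free strip, which does not exist. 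This argument works uniformly at every stage of the induction and does not rely on the outcome of later stages. Relatedly, the paper deduces up front that the slide is maximal ($j=a_i+1$, inner island empty) by exactly this reasoning; doing so first, as the paper does, streamlines the bookkeeping — you instead carry a possibly nonempty inner island through the induction and only kill it at the end by the same flawed ``two strips'' count. Once you replace the ``$B_{k+1},B_{k+2}$ cannot tile three components'' justification with the single-diagonal-free-strip argument, the proof goes through and essentially coincides with the paper's.
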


\begin{proof}

The $i$th border strip of~$T$ is the $j$th slide for some~$j>0$;
we use $j$ for this number in the rest of this proof.

By Lemma~\ref{lem:principaldiag},
a border strip cannot contain more than one box
on the principal diagonal.
Here we have $k+2$ border strips and $k+1$ boxes on the principal diagonal,
so there can be at most one \emph{non-principal} border strip
(i.e., a border strip that does not contain a box on the principal diagonal).
In particular, there can be at most one border strip that
lies entirely in an overhang.

It follows that the inner island of~$T$ must be empty,
or else the inner and outer islands would both contain
non-principal border strips.
Thus the $i$th border strip must be slid as far as possible.
i.e., $j=a_i+1$.

For brevity, call the outer island~$S$.
The area of~$S$ is $j = a_i + 1$.
Equations \eqref{eq:toolarge} and~\eqref{eq:equality}
are vacuously true if $i=k$, so assume that $i<k$.
We prove the two equations jointly, by induction on~$i'$.
The base case is $i'=i+1$.
By Lemma~\ref{lem:hkak3},
\begin{equation*}
h_{i+1} - a_{i+1} \ge 3 > 1,
\end{equation*}
so $h_{i+1} - a_{i+1} + a_i > a_i + 1$,
proving Equation~\eqref{eq:toolarge} for $i' = i+1$.
But what Equation~\eqref{eq:toolarge} says is that
the $(i+1)$st border strip is too large to fit inside~$S$;
therefore, it must lie inside the $(i+1)$st principal hook.
In fact, it must entirely fill the $(i+1)$st principal hook;
the reason is that the area of the $(i+1)$st border strip is
\begin{equation*}
h_{i+1}-a_{i+1} + a_i \ge h_{i+1} - a_{i+1},
\end{equation*}
so any boxes in the $(i+1)$st principal hook not covered by
the $(i+1)$st border strip must lie in an
$(i+1)$st overhang, creating another region 
that must be covered by a non-principal border strip.
Therefore $h_{i+1} - a_{i+1} + a_i = h_{i+1}$,
proving $a_{i'} = a_{i'-1}$ for $i'=i+1$.

The proof of the induction step is similar to the proof of the base case.
Lemma~\ref{lem:hkak3} implies $h_{i'} - a_{i'} > 1$,
and by induction we may assume that $a_{i'-1} = a_i$,
so this proves Equation~\eqref{eq:toolarge}.
The $i'$th border strip cannot fit inside~$S$
and hence must lie inside the $i'$th principal hook.
Furthermore, by the same reasoning as we gave above,
it must entirely fill the $i'$th principal hook,
proving that $a_{i'} = a_{i'-1}$.

We have now almost completely specified the structure of~$T$.
The border strips before the $i$th are placed greedily;
the $i$th border strip is slid as far as possible;
the remaining border strips up to the $k$th border strip
each entirely occupy the respective principal hook.
The only boxes not covered by the first $k$ border strips
are in two disconnected components, namely $S$
and the $(k+1)$st principal hook.
Therefore $\alpha_{k+1}$ and $\alpha_{k+2}$
must specify the areas of these two regions,
and in particular one of them must equal~$h_{k+1}$.
\end{proof}

It turns out that for most shapes~$\lambda$ with $I(\lambda)=\infty$,
we can find an~$\alpha$ for which there exists a unique greedy BST
of shape~$\lambda$ and type~$\alpha$ and no non-greedy BST
of shape~$\lambda$ and type~$\alpha$; see Lemma~\ref{lem:no_nongreedy}.
However, there are a few exceptions
when the innermost hooks are very small;
these are dealt with in Lemma~\ref{lem:exceptional}.

\begin{lemma}
\label{lem:no_nongreedy}

Assume that $I(\lambda)=\infty$.
For $1 \le i \le k$, let $\alpha_i = \alpha_i(\lambda)$
as defined by Equation~\eqref{eq:alphalambda}.
Let
\begin{equation*}
\alpha_{k+2} :=
\begin{cases}
2, & \mbox{{\rm if} $a_k \ge 2$ {\rm and} $a_{k+1}=0${\rm ;}} \\
a_{k+1}+1, & \mbox{{\rm if} $a_k \ne a_{k+1}$ {\rm and} $a_{k+1} \ge 1${\rm ;}} \\
a_{k+1}+2, & \mbox{{\rm if} $a_k = a_{k+1} \ge 2$.}
\end{cases}
\end{equation*}
Let $\alpha_{k+1} := h_{k+1} + a_k + 1 - \alpha_{k+2}$.
Then there is exactly $1$ BST of shape~$\lambda$ and type~$\alpha$,
and this BST is greedy.
\end{lemma}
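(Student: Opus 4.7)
The plan is to combine Lemma~\ref{lem:nongreedy2} with a case analysis of how the last two border strips can tile the region~$R$ left after the first $k$ greedy strips are placed. Since $I(\lambda)=\infty$ implies $a_{k+1}=b_{k+1}$, the hook $H_{k+1}$ is symmetric and $R$ is a connected Z-shape of area $a_k+h_{k+1}+1=\alpha_{k+1}+\alpha_{k+2}$ consisting of the uncovered $k$-th overhang, the box $x_k$, and all of $H_{k+1}$.

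First I would use Lemma~\ref{lem:nongreedy2} to rule out non-greedy BSTs. For a non-greedy BST with smallest deviation index~$i$, the iterated equalities $a_{i'}=a_{i'-1}$ for $i<i'\le k$ force $a_i=a_k$, and the lemma then requires $\{\alpha_{k+1},\alpha_{k+2}\}=\{a_k+1,\,2a_{k+1}+1\}$. I would check case by case that this equation fails: in case~1, $\{a_k,2\}\ne\{a_k+1,1\}$ since $a_k\ge 2$; in case~2, matching element-wise forces $a_k=a_{k+1}$, contradicting the hypothesis; and in case~3, matching forces $a_{k+1}=1$, contradicting $a_{k+1}\ge 2$.

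Next I would establish existence and uniqueness of the greedy BST by parametrizing tilings of~$R$ with two strips $S_1$ (labeled $k+1$) and $S_2$ (labeled $k+2$). Because labels must weakly increase, the boxes labeled $k+1$ in each row of $R$ and in column~$k+1$ below row~$k+1$ form a prefix, so tilings are encoded by a triple $(r_1,r_2,c)$ giving these prefix lengths. Imposing border-strip connectedness and the staircase condition on both $S_1$ and $S_2$, together with label compatibility at the shared column~$\lambda_{k+1}$, narrows the admissible triples down to a short list of sub-configurations. Plugging the prescribed~$\alpha_{k+1}$ into $r_1+r_2+c=\alpha_{k+1}$, I would find exactly one admissible triple in each case; for instance, in case~3 only $(r_1,r_2,c)=(0,\,a_{k+1},\,a_{k+1})$ survives.

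The main obstacle will be this uniqueness enumeration. The three values of~$\alpha_{k+2}$---namely $2$, $a_{k+1}+1$, and $a_{k+1}+2$---have been calibrated so that the unique valid split forces $S_2$ to take exactly one ``bent'' shape; verifying this requires carefully tracking when $S_2$ threatens to disconnect across the bends of~$R$ at $x_k$ or at $(k+1,k+1)$, as well as when the weakly-increasing condition at column~$\lambda_{k+1}$ is tight.
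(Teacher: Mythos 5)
Your treatment of the non-greedy half is correct and is essentially the paper's: Lemma~\ref{lem:nongreedy2}, combined with the area identity $\alpha_{k+1}+\alpha_{k+2}=h_{k+1}+a_k+1$, forces any non-greedy BST to have $\{\alpha_{k+1},\alpha_{k+2}\}=\{a_k+1,\,h_{k+1}\}$ with $h_{k+1}=2a_{k+1}+1$, and your three case checks (modulo also noting that the cross-matching in case~2 is killed by $a_{k+1}\ge 1$) do rule this out; equivalently one just checks that neither prescribed part equals $h_{k+1}$, which is how the paper phrases it.

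The gap is in the second half, which is where the lemma actually lives. The whole point of the calibrated values $\alpha_{k+2}\in\{2,\,a_{k+1}+1,\,a_{k+1}+2\}$ is the verification that the residual region $R$ admits \emph{exactly one} legal tiling of type $(\alpha_{k+1},\alpha_{k+2})$, and you never carry this out: you set up a parametrization by triples $(r_1,r_2,c)$, assert that ``exactly one admissible triple survives in each case,'' and yourself flag this enumeration as the main obstacle. As written this is a plan, not a proof, and it is also heavier machinery than needed: since $I(\lambda)=\infty$ makes $H_{k+1}$ self-conjugate, $R$ (the uncovered $k$th overhang, the box $x_k$, and $H_{k+1}$) is the arm (or leg) extension of $H_{k+1}$ and is itself a border strip, so any tiling of $R$ by two strips cuts it into its two end segments; hence the $(k+2)$nd strip must sit at the upper-right or the lower-left end of $R$, leaving only two candidates. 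What remains---and what the paper's proof consists of---is the case-by-case check that exactly one of these two placements respects the weakly increasing condition: e.g., when $\alpha_{k+2}=a_{k+1}+1$ the lower-left placement exactly fills the first column of $H_{k+1}$, putting a larger label to the left of a smaller one, while the upper-right placement is legal precisely because $a_k\ne a_{k+1}$ prevents it from exactly filling the added row; the hypotheses $a_k\ge 2$ (case~1) and $a_{k+1}\ge 2$ (case~3) are used in the analogous checks. Until those verifications are done, neither existence nor uniqueness of the completing tiling---and hence the count of exactly one greedy BST---has been established.
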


\begin{proof}

First, it is easy to verify that neither $\alpha_{k+1}$ nor $\alpha_{k+2}$
is equal to $h_{k+1}$, so by Lemma~\ref{lem:nongreedy2},
there can be no non-greedy BST of shape~$\lambda$ and type~$\alpha$.
So we just need to verify that there is exactly one greedy BST
of shape~$\lambda$ and type~$\alpha$.
To do this, it suffices to consider the residual shape left uncovered by
the greedy arrangement, and show that there is a unique BST of this
residual shape of type $(\alpha_{k+1}, \alpha_{k+2})$.

 In the case that $a_k \ge 2$ and $a_{k+1}=0$,
the residual shape is an arm extension
(or a leg extension, but we may assume without loss of generality
that it is an arm extension;
this is also true for the remaining cases,
so we will assume arm extensions in the rest of this proof without
further comment)
of a single box, where the additional row has length $a_k+1\ge 3$.
In this case, $\alpha_{k+2} = 2$,
so putting the 2nd border strip at the lower left end is illegal,
while putting the 2nd border strip at the upper right end is legal
(since the first row has length at least~$3$).
See Figure~\ref{fig:ak10}.
\bigskip

\begin{figure}[!ht]
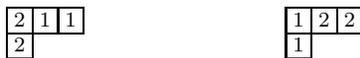

\begin{center}
\begin{ytableau}
 2 & 1 & 1 \\
 2
\end{ytableau}
\hskip 1truein
\begin{ytableau}
 1 & 2 & 2 \\
 1
\end{ytableau}
\end{center}
\caption{Illegal (left) and Legal (right) Tableaux}
\label{fig:ak10}
\end{figure}

In the case that $a_k \ne a_{k+1}$ and $a_{k+1} \ge 1$,
the residual shape is an arm extension of
a proper self-conjugate hook~$H$,
where the number of boxes in the additional row
is nonzero and is different from the number of boxes
in the first row (equivalently, the first column) of~$H$.
In this case, $\alpha_{k+2} = a_{k+1} + 1$,
which is exactly the number of boxes in the first column of~$H$.
Therefore putting the 2nd border strip at the lower left end
is illegal, since it would fill up the entire first column of~$H$,
resulting in a $2$ to the left of a~$1$ in the first row of~$H$.
On the other hand, putting the 2nd border strip at the
upper right end is legal,
because the 2nd border strip will either be a proper subset
of the additional row,
or it will contain the entire additional row as well as
a nonzero number of boxes at the right end of the first row of~$H$
(but not all of the first row of~$H$),
and either way, the danger of having a $2$ above a~$1$ will be
averted.
See Figures \ref{fig:ak_lt_ak1} and~\ref{fig:ak_gt_ak1}
for examples in which $a_k < a_{k+1}$ and $a_{k+1} < a_k$
respectively.

\begin{figure}[!ht]
\begin{center}
\begin{ytableau}
\none & \none & \none & 1 & 1 & 1 \\
  2   &   1   &   1   & 1 \\
  2 \\
  2 \\
  2
\end{ytableau}
\hskip 1truein
\begin{ytableau}
\none & \none & \none & 2 & 2 & 2 \\
  1   &   1   &   1   & 2 \\
  1 \\
  1 \\
  1
\end{ytableau}
\end{center}
\caption{Illegal (left) and Legal (right) Tableaux}
\label{fig:ak_lt_ak1}
\end{figure}

\begin{figure}[!ht]
\begin{center}
\begin{ytableau}
\none & \none & 1 & 1 & 1 & 1\\
  2   &   1   & 1 \\
  2 \\
  2
\end{ytableau}
\hskip 1truein
\begin{ytableau}
\none & \none & 1 & 2 & 2 & 2\\
  1   &   1   & 1 \\
  1 \\
  1
\end{ytableau}
\end{center}
\caption{Illegal (left) and Legal (right) Tableaux}
\label{fig:ak_gt_ak1}
\end{figure}

There remains the case that $a_k = a_{k+1} \ge 2$,
which is similar to the preceding case except that now
the additional row is exactly the same length
as the first row of~$H$.
In this case, $\alpha_{k+1} = a_{k+1} + 2$,
which is one more than the height of the first column of~$H$.
Putting the 2nd border strip at the lower left end
fills up the first column of~$H$ as well as an extra box
in the first row of~$H$;
since the first row of~$H$ has length at least~$3$,
there will be a~$1$ in the first row of~$H$ to the right of a~$2$,
which is illegal.
On the other hand, putting the 2nd border strip at the upper right end
fills up the additional row as well as
the rightmost box in the first row of~$H$,
and this is legal.  See Figure~\ref{fig:ak_eq_ak1}.

\begin{figure}[!ht]
\begin{center}
\begin{ytableau}
\none & \none & 1 & 1 & 1 \\
  2   &   2   & 1 \\
  2 \\
  2
\end{ytableau}
\hskip 1truein
\begin{ytableau}
\none & \none & 2 & 2 & 2\\
  1   &   1   & 2 \\
  1 \\
  1
\end{ytableau}
\end{center}
\caption{Illegal (left) and Legal (right) Tableaux}
\label{fig:ak_eq_ak1}
\end{figure}
This completes the proof.
\end{proof}

 Finally, we deal with the exceptional cases not
covered by Lemma~\ref{lem:no_nongreedy}.

\begin{lemma}
\label{lem:exceptional}

Assume that $I(\lambda)=\infty$.
For $1 \le i \le k$, let $\alpha_i = \alpha_i(\lambda)$
as defined by Equation~\eqref{eq:alphalambda},
and let $\alpha_{k+1} = h_{k+1}$ and $\alpha_{k+2} = a_k + 1$.
Let $G$ be the number of greedy BST
of shape~$\lambda$ and type~$\alpha$
and let $N$ be the number of non-greedy BST
of shape~$\lambda$ and type~$\alpha$.
\begin{enumerate}
\item \label{infty1} 
If $a_k = 0$ and $a_{k+1} = 0$ then $G=1$ and $N=2$.
\item \label{infty2} 
If $a_k = 1$ and $a_{k+1} = 0$ then $G=0$ and $N=1$.
\item \label{infty3} 
If $a_k = 1$ and $a_{k+1} = 1$ then $G=0$ and $N=1$.
\end{enumerate}
\end{lemma}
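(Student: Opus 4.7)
The plan is to enumerate greedy and non-greedy BSTs directly in each of the three cases, leveraging the structural constraints from Lemma~\ref{lem:nongreedy2}. For the greedy count, I would describe the residual shape after greedy placement of the first $k$ strips: it consists of $H_{k+1}$ together with the box $x_k$ and the $a_k$-box uncovered $k$th overhang attached to $x_k$. Since $a_{k+1}\le 1$ in all three cases, $H_{k+1}$ is either a single box (Cases~1 and~2) or a three-box hook (Case~3), so the residual has only two, three, or five boxes, respectively. I enumerate every way to split this small residual into border strips of areas $\alpha_{k+1}$ and $\alpha_{k+2}$ and check BST monotonicity at each inner corner. In Case~1 the column-monotone tiling by two area-$1$ strips is forced uniquely, yielding $G=1$; in Cases~2 and~3 every candidate split violates row- or column-monotonicity at an inner corner of the residual, yielding $G=0$.

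For the non-greedy count, I invoke Lemma~\ref{lem:nongreedy2}: if $i \le k$ is the smallest non-greedy index, then the $i$th border strip is slid by $j = a_i+1$, every later border strip through the $k$th entirely fills its hook, and the chain $a_i = a_{i+1} = \cdots = a_k$ holds. The first $k$ strips then leave only the outer island (of $a_k+1$ boxes) and $H_{k+1}$ uncovered, and these two pieces are disconnected, so the last two strips split one-to-each. Let $i^\ast$ denote the smallest index for which the propagation chain holds. I would argue that $i = i^\ast$ is the unique viable non-greedy index: for every $i > i^\ast$ the chain forces $a_{i-1} = a_i$, so the outer island equals the entire $(i-1)$st overhang together with $x_{i-1}$, but $x_{i-1}$ is then directly above (or to the left of) a box of $H_i$ that the slid $i$th strip fills and labels $i$, forcing the outer-island label to be at most $i < k+1$ and producing a contradiction.

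Having pinned down $i = i^\ast$, I finish case by case. In Case~1 the outer island and $H_{k+1}$ are each single boxes with no trapping neighbors below or to the right, so either of the two area-$1$ strips can sit on either box, giving $N=2$. In Cases~2 and~3 the area-$\alpha_{k+2}=2$ strip must lie on the two-box outer island (neither the single-box $H_{k+1}$ in Case~2 nor the three-box hook $H_{k+1}$ in Case~3 admits an area-$2$ border strip whose complement has area $\alpha_{k+1}$), so the area-$\alpha_{k+1}$ strip is forced onto $H_{k+1}$; a direct monotonicity check---simplified by the fact that the boxes immediately below or right of the outer island are either outside $D$ or belong to the outer island itself---shows the forced assignment is a valid BST, giving $N=1$. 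The main obstacle is the outer-island analysis for $i = i^\ast$: its precise shape depends on whether $a_{i^\ast-1}$ is zero, equal to $a_k$ (ruled out by minimality), or larger than $a_k$, and on whether the $(i^\ast-1)$st short overhang lies along the arm or the leg; in each sub-case the monotonicity verification is routine but must be carried out separately.
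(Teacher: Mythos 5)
Your proposal is correct and follows essentially the same approach as the paper: count $G$ by enumerating tilings of the small residual shape left by the greedy arrangement, and count $N$ by invoking Lemma~\ref{lem:nongreedy2} to pin the first non-greedy index down to the smallest index $i^\ast$ (which the paper calls $i'$) at which the chain $a_{i^\ast}=\cdots=a_k$ begins. Your contradiction for $i>i^\ast$---that the outer island would contain $x_{i-1}$, which lies above or to the left of an $H_i$-box that the slid $i$th strip labels $i$---is the same weakly-increasing-property violation the paper exhibits, and your deferred sub-case check for $i=i^\ast$ mirrors the paper's two subcases ($a_{i-1}=0$ versus $a_{i-1}>1$, illustrated by its figures).
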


\begin{proof}

Let us consider Case~\ref{infty1} first.
The greedy arrangement leaves just two adjacent boxes uncovered,
and there is obviously only one way to cover them with
the (singleton) $(k+1)$st and $(k+2)$nd border strips.
See Figure~\ref{fig:infty1} for an example.

\begin{figure}[!ht]
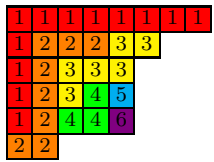

\begin{center}
\begin{ytableau}
 *(red) 1 & *(red) 1    & *(red)    1 & *(red)    1 & *(red)    1 & *(red)    1 & *(red) 1 & *(red) 1 \\
 *(red) 1 & *(orange) 2 & *(orange) 2 & *(orange) 2 & *(yellow) 3 & *(yellow) 3 \\
 *(red) 1 & *(orange) 2 & *(yellow) 3 & *(yellow) 3 & *(yellow) 3 \\
 *(red) 1 & *(orange) 2 & *(yellow) 3 & *(green)  4 & *(cyan)   5 \\
 *(red) 1 & *(orange) 2 & *(green)  4 & *(green)  4 & *(violet) 6 \\
 *(orange) 2 & *(orange) 2
\end{ytableau}
\end{center}
\caption{Case \ref{infty1}: $h_{k+1} = 1$ and $a_k = 0$}
\label{fig:infty1}
\end{figure}

Now consider the non-greedy BSTs.
Let $i'$ be the smallest integer with the property that $a_{i'}=a_l = 0$
for all $i'\le l \le k$,
and let the $i$th border strip be the first border strip
that is not positioned greedily.
We claim that $i=i'$.
Note first that Lemma~\ref{lem:nongreedy2} implies that $i\ge i'$.
If $i>i'$, then the outer island consists of the singleton box~$x_{i-1}$,
which cannot be covered by the
$(k+1)$st or $(k+2)$nd border strip
since it would violate the weakly increasing property.
See Figure~\ref{fig:violate1} for an example
with $4 = i > i' = 3$.

\begin{figure}[!ht]
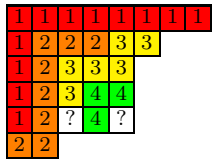

\begin{center}
\begin{ytableau}
 *(red) 1 & *(red) 1    & *(red)    1 & *(red)    1 & *(red)    1 & *(red)    1 & *(red) 1 & *(red) 1 \\
 *(red) 1 & *(orange) 2 & *(orange) 2 & *(orange) 2 & *(yellow) 3 & *(yellow) 3 \\
 *(red) 1 & *(orange) 2 & *(yellow) 3 & *(yellow) 3 & *(yellow) 3 \\
 *(red) 1 & *(orange) 2 & *(yellow) 3 & *(green)  4 & *(green)  4 \\
 *(red) 1 & *(orange) 2 & ?           & *(green)  4 & ? \\
 *(orange) 2 & *(orange) 2
\end{ytableau}
\end{center}
\caption{Case \ref{infty1}: Illegal Non-Greedy BST}
\label{fig:violate1}
\end{figure}

Conversely, if $i=i'$, then since $a_i = 0$,
the only possible slide of the $i$th border strip is the 1st slide,
so the outer island consists of
a singleton box at the extreme end of an $(i-1)$st overhang.
This box and the (singleton) $(k+1)$st hook
can be covered by the 
$(k+1)$st and $(k+2)$nd border strip in either order,
while the intervening border strips are uniquely forced
to entirely cover their corresponding principal hooks.
We obtain a total of 2 non-greedy BSTs.  See Figure~\ref{fig:eitherorder}
for an example.

\begin{figure}[!ht]
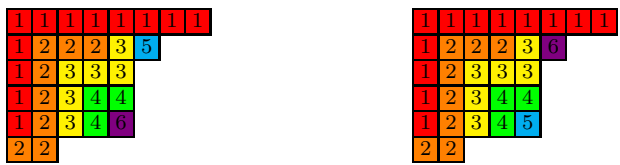

\begin{center}
\begin{ytableau}
 *(red) 1 & *(red) 1    & *(red)    1 & *(red)    1 & *(red)    1 & *(red)    1 & *(red) 1 & *(red) 1 \\
 *(red) 1 & *(orange) 2 & *(orange) 2 & *(orange) 2 & *(yellow) 3 & *(cyan) 5 \\
 *(red) 1 & *(orange) 2 & *(yellow) 3 & *(yellow) 3 & *(yellow) 3 \\
 *(red) 1 & *(orange) 2 & *(yellow) 3 & *(green)  4 & *(green)  4 \\
 *(red) 1 & *(orange) 2 & *(yellow) 3 & *(green)  4 & *(violet) 6 \\
 *(orange) 2 & *(orange) 2
\end{ytableau}
\hskip 1truein
\begin{ytableau}
 *(red) 1 & *(red) 1    & *(red)    1 & *(red)    1 & *(red)    1 & *(red)    1 & *(red) 1 & *(red) 1 \\
 *(red) 1 & *(orange) 2 & *(orange) 2 & *(orange) 2 & *(yellow) 3 & *(violet) 6 \\
 *(red) 1 & *(orange) 2 & *(yellow) 3 & *(yellow) 3 & *(yellow) 3 \\
 *(red) 1 & *(orange) 2 & *(yellow) 3 & *(green)  4 & *(green)  4 \\
 *(red) 1 & *(orange) 2 & *(yellow) 3 & *(green)  4 & *(cyan) 5 \\
 *(orange) 2 & *(orange) 2
\end{ytableau}
\end{center}
\caption{Case \ref{infty1}: Two Non-Greedy BSTs}
\label{fig:eitherorder}
\end{figure}

Now consider Case~\ref{infty2}.
The greedy arrangement leaves uncovered a set of three boxes
(two in one row and one in another),
and there is no legal way to cover this with
a $(k+1)$st border strip of area~1 and
a $(k+2)$nd border strip of area~2.
See Figure~\ref{fig:violate2} for an example.

\begin{figure}[!ht]
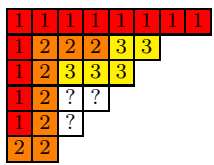

\begin{center}
\begin{ytableau}
 *(red) 1 & *(red) 1    & *(red)    1 & *(red)    1 & *(red)    1 & *(red)    1 & *(red) 1 & *(red) 1 \\
 *(red) 1 & *(orange) 2 & *(orange) 2 & *(orange) 2 & *(yellow) 3 & *(yellow) 3 \\
 *(red) 1 & *(orange) 2 & *(yellow) 3 & *(yellow) 3 & *(yellow) 3 \\
 *(red) 1 & *(orange) 2 & ? & ? \\
 *(red) 1 & *(orange) 2 & ? \\
 *(orange) 2 & *(orange) 2
\end{ytableau}
\end{center}
\caption{Case~\ref{infty2}: Illegal Greedy BST}
\label{fig:violate2}
\end{figure}

Now consider the non-greedy BSTs.
Let $i'$ be the smallest integer with the property that $a_{i'}=a_l = 1$
for all $ i' \le l \le k$,
and let the $i$th border strip be the first border strip
that is not positioned greedily.
The $i$th border strip must be slid as far as possible,
since otherwise we would have both an outer island
and an inner island,
creating more disconnected components
than we have border strips to cover them.
We claim that $i=i'$.
The argument is similar to the argument in Case~\ref{infty1}.
Lemma~\ref{lem:nongreedy2} implies that $i\ge i'$.
If $i > i'$, then because $a_i = 1$,
the outer island consists of two boxes,
one of which is~$x_{i-1}$,
which cannot be covered by the $(k+1)$st
or $(k+2)$nd border strip without violating
the weakly increasing property.
See Figure~\ref{fig:violate3}
for an example with $3 = i > i' = 2$.

\begin{figure}[!ht]
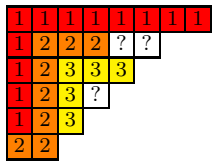

\begin{center}
\begin{ytableau}
 *(red) 1 & *(red) 1    & *(red)    1 & *(red)    1 & *(red)    1 & *(red)    1 & *(red) 1 & *(red) 1 \\
 *(red) 1 & *(orange) 2 & *(orange) 2 & *(orange) 2 & ? & ? \\
 *(red) 1 & *(orange) 2 & *(yellow) 3 & *(yellow) 3 & *(yellow) 3 \\
 *(red) 1 & *(orange) 2 & *(yellow) 3 & ? \\
 *(red) 1 & *(orange) 2 & *(yellow) 3 \\
 *(orange) 2 & *(orange) 2
\end{ytableau}
\end{center}
\caption{Case~\ref{infty2}: Illegal Non-Greedy BST}
\label{fig:violate3}
\end{figure}

Conversely, if $i=i'$ then there exists a unique
non-greedy BST, because the outer island has area~2
and the innermost principal hook has area~1,
so the outer island must be covered by the $(k+2)$nd border strip
and the innermost principal hook must be covered by
the $(k+1)$st border strip.
It is readily checked that covering the outer island
with the $(k+2)$nd border strip does not violate
the weakly increasing property, regardless of
whether $a_{i-1} = 0$ or $a_{i-1} > 1$;
see Figure~\ref{fig:infty2} for diagrams of these two subcases.
\bigskip

\begin{figure}[!ht]
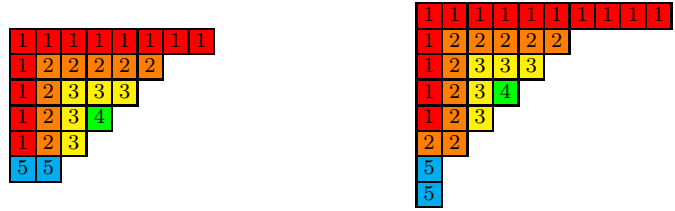

\begin{center}
\begin{ytableau}
 *(red) 1 & *(red) 1    & *(red)    1 & *(red)    1 & *(red)    1 & *(red)    1 & *(red) 1 & *(red) 1 \\
 *(red) 1 & *(orange) 2 & *(orange) 2 & *(orange) 2 & *(orange) 2 & *(orange) 2 \\
 *(red) 1 & *(orange) 2 & *(yellow) 3 & *(yellow) 3 & *(yellow) 3 \\
 *(red) 1 & *(orange) 2 & *(yellow) 3 & *(green)   4 \\
 *(red) 1 & *(orange) 2 & *(yellow) 3 \\
 *(cyan) 5 & *(cyan) 5
\end{ytableau}
\hskip 1truein
\begin{ytableau}
 *(red) 1 & *(red) 1    & *(red)    1 & *(red)    1 & *(red)    1
  & *(red)    1 & *(red) 1 & *(red) 1 & *(red)    1 & *(red)    1 \\
 *(red) 1 & *(orange) 2 & *(orange) 2 & *(orange) 2 & *(orange) 2 & *(orange) 2 \\
 *(red) 1 & *(orange) 2 & *(yellow) 3 & *(yellow) 3 & *(yellow) 3 \\
 *(red) 1 & *(orange) 2 & *(yellow) 3 & *(green)   4 \\
 *(red) 1 & *(orange) 2 & *(yellow) 3 \\
 *(orange) 2 & *(orange) 2 \\
 *(cyan) 5 \\
 *(cyan) 5
\end{ytableau}
\end{center}
\caption{Case~\ref{infty2}: $h_{k+1}=1$ and $a_k=1$}
\label{fig:infty2}
\end{figure}

Finally, there is Case~\ref{infty3},
which is very similar to Case~\ref{infty2},
so we will just sketch the argument.
The greedy arrangement leaves 5 boxes uncovered,
which cannot be covered by
a $(k+1)$st border strip of area~3 and
a $(k+2)$nd border strip of area~2;
see Figure~\ref{fig:violate4}.

\begin{figure}[!ht]
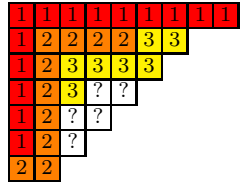

\begin{center}
\begin{ytableau}
 *(red) 1 & *(red) 1    & *(red)    1 & *(red)    1 & *(red)    1
  & *(red)    1 & *(red) 1 & *(red) 1 & *(red) 1 \\
 *(red) 1 & *(orange) 2 & *(orange) 2 & *(orange) 2 & *(orange) 2 & *(yellow) 3 & *(yellow) 3 \\
 *(red) 1 & *(orange) 2 & *(yellow) 3 & *(yellow) 3 & *(yellow) 3 & *(yellow) 3 \\
 *(red) 1 & *(orange) 2 & *(yellow) 3 & ?  & ? \\
 *(red) 1 & *(orange) 2 & ? & ? \\
 *(red) 1 & *(orange) 2 & ?  \\
 *(orange) 2 & *(orange) 2
\end{ytableau}
\end{center}
\caption{Case~\ref{infty3}: Illegal Greedy BST}
\label{fig:violate4}
\end{figure}

For the non-greedy BSTs,
we define $i$ and~$i'$ as in Case~\ref{infty2}, and argue as before
that we must have $i=i'$; see Figure~\ref{fig:violate5}.

\begin{figure}[!ht]
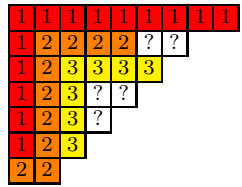

\begin{center}
\begin{ytableau}
 *(red) 1 & *(red) 1    & *(red)    1 & *(red)    1 & *(red)    1
  & *(red)    1 & *(red) 1 & *(red) 1 & *(red) 1 \\
 *(red) 1 & *(orange) 2 & *(orange) 2 & *(orange) 2 & *(orange) 2 & ? & ? \\
 *(red) 1 & *(orange) 2 & *(yellow) 3 & *(yellow) 3 & *(yellow) 3 & *(yellow) 3 \\
 *(red) 1 & *(orange) 2 & *(yellow) 3 & ?  & ? \\
 *(red) 1 & *(orange) 2 & *(yellow) 3 & ? \\
 *(red) 1 & *(orange) 2 & *(yellow) 3  \\
 *(orange) 2 & *(orange) 2
\end{ytableau}
\end{center}
\caption{Case~\ref{infty3}: Illegal Non-Greedy BST}
\label{fig:violate5}
\end{figure}

If $i=i'$ then as in Case~\ref{infty2} we argue that
there is a unique non-greedy BST,
with the $(k+1)$st border strip covering the innermost principal hook
and the $(k+2)$nd border strip covering the outer island;
see Figure~\ref{fig:infty3}.

\begin{figure}[!ht]
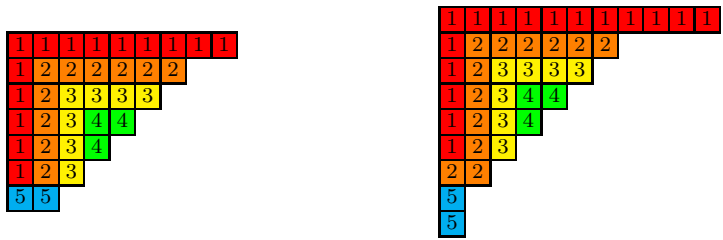
  
\begin{center}
\begin{ytableau}
 *(red) 1 & *(red) 1    & *(red)    1 & *(red)    1 & *(red)    1
  & *(red)    1 & *(red) 1 & *(red) 1 & *(red) 1 \\
 *(red) 1 & *(orange) 2 & *(orange) 2 & *(orange) 2 & *(orange) 2 & *(orange) 2 & *(orange) 2 \\
 *(red) 1 & *(orange) 2 & *(yellow) 3 & *(yellow) 3 & *(yellow) 3 & *(yellow) 3 \\
 *(red) 1 & *(orange) 2 & *(yellow) 3 & *(green) 4  & *(green) 4 \\
 *(red) 1 & *(orange) 2 & *(yellow) 3 & *(green) 4 \\
 *(red) 1 & *(orange) 2 & *(yellow) 3  \\
 *(cyan) 5 & *(cyan) 5
\end{ytableau}
\hskip 1truein
\begin{ytableau}
 *(red) 1 & *(red) 1    & *(red)    1 & *(red) 1 & *(red) 1
  & *(red)    1 & *(red) 1 & *(red) 1 & *(red) 1 & *(red) 1 & *(red) 1 \\
 *(red) 1 & *(orange) 2 & *(orange) 2 & *(orange) 2 & *(orange) 2 & *(orange) 2 & *(orange) 2 \\
 *(red) 1 & *(orange) 2 & *(yellow) 3 & *(yellow) 3 & *(yellow) 3 & *(yellow) 3 \\
 *(red) 1 & *(orange) 2 & *(yellow) 3 & *(green) 4  & *(green) 4 \\
 *(red) 1 & *(orange) 2 & *(yellow) 3 & *(green) 4 \\
 *(red) 1 & *(orange) 2 & *(yellow) 3  \\
 *(orange) 2 & *(orange) 2 \\
 *(cyan) 5 \\
 *(cyan) 5
\end{ytableau}
\end{center}
\caption{Case~\ref{infty3}: $h_{k+1} = 3$ and $a_k=1$}
\label{fig:infty3}
\end{figure}
In all cases, we have found an odd number of BSTs,
so the character value cannot be zero.
\end{proof}

\section{Concluding Remarks}

A crude upper bound on the number of queries needed
for our algorithm is $O(n^{3/2})$.
In the forward pass, there are at most $n^{1/2}$ principal hooks,
and the size of each principal hook can be determined in at most $n$ queries.
In the backward pass, there are again at most $n^{1/2}$ principal hooks,
and for each principal hook, determining $a_i$ and~$b_i$
requires at most $n$ queries, and distinguishing
doppelg\"angers requires a constant number of queries.
We expect that a more careful analysis, which we have not carried out,
will show that the required number of queries is (approximately) linear
in~$n$, because if there are a lot of principal hooks
then the number of queries per principal hook will be reduced.

It is natural to ask if a more efficient algorithm can be found.
As we mentioned in the Introduction,
empirically it seems that permutations that consist mostly of fixed points
are good distinguishers.
Enumerating the corresponding BSTs naturally leads
to enumerating skew tableaux,
for which there exist formulae
such as the Naruse hook-length formula
(see~\cite{morales-pak-panova} for a readable description and proof
of the Naruse hook-length formula).
Although it does not seem easy to prove that
various alternating sums of hook-length formulae
cannot coincide in value, perhaps it can be done.
If so, the number of queries needed could conceivably be drastically reduced.

\bibliography{refs}
\bibliographystyle{plain}

\end{document}